\newtheorem{thm}{Theorem}[section]
\newtheorem{lem}{Lemma}[section]
\newtheorem{rem}{Remark}[section]
\newtheorem{Def}{Definition}[section]
\newtheorem{Ass}{Assumption}[section]
\let\originalleft\left
\let\originalright\right
\renewcommand{\left}{\mathopen{}\mathclose\bgroup\originalleft}
\renewcommand{\right}{\aftergroup\egroup\originalright}
\newcommand{\Addresses}{{
		\footnote{

				\noindent	 \textsuperscript{1,3}Department of Mathematics, PSG College of Arts and Science,  Coimbatore, 641 046, India.

			\noindent  \textit{e-mail\textsuperscript{1}:} \texttt{ravikumarkpsg@gmail.com.}
			
			\noindent  \textit{e-mail\textsuperscript{3}:} \texttt{angurajpsg@yahoo.com.}

					\noindent \textsuperscript{2}Department of Mathematics, Indian Institute of Technology Roorkee-IIT Roorkee,
				Haridwar Highway, Roorkee, Uttarakhand 247667, India.\par\nopagebreak
				\noindent  \textit{e-mail\textsuperscript{3}:} \texttt{maniltmohan@ma.iitr.ac.in, maniltmohan@gmail.com.}

			\noindent \textsuperscript{*}Corresponding author.

			\textit{Key words:} Non-autonomous differential equations, Approximate controllability, Schauder's fixed point theorem, Resolvent operators.
			
			Mathematics Subject Classification (2010): 34K06, 34A12, 37L05, 93B05.

		{\bf Acknowledgments:} M. T. Mohan would  like to thank the Department of Science and Technology (DST), India for Innovation in Science Pursuit for Inspired Research (INSPIRE) Faculty Award (IFA17-MA110).

}}}
\begin{document}
\title[Approximate controllability of a non-autonomous differential equation]{Approximate controllability of a non-autonomous evolution equation in Banach spaces	\Addresses}
\author [ K. Ravikumar, M. T. Mohan and A. Anguraj]{K. Ravikumar\textsuperscript{1}, Manil T. Mohan\textsuperscript{2*}  and A. Anguraj\textsuperscript{3}}
 \maketitle{}
\begin{abstract}
In this paper, we consider a non-autonomous nonlinear evolution equation in separable, reflexive Banach spaces. First, we consider a linear problem and establish the approximate controllability results by finding a feedback control with the help of an optimal control problem. We then establish the approximate controllability results for a semilinear differential  equation in Banach spaces using the theory of linear evolution systems, properties of resolvent operator and Schauder's fixed point theorem. Finally, we provide an example of a non-autonomous, nonlinear diffusion equation in Banach spaces to validate the results we obtained.
\end{abstract}
\section{Introduction}\label{sec1}\setcounter{equation}{0}
The concept of controllability plays an important role in the analysis and design of control systems. Controllability of the deterministic and stochastic dynamical control system in infinite-dimensional spaces is well developed using different kinds of approaches, and the details can be found in various paper see for example \cite{AB,NA,M1,J}, etc and the references therein. From the mathematical point of view, in infinite dimensions, the problems of exact and approximate controllability are to be distinguished. Exact controllability enables to steer the system to arbitrary final state (see \cite{MTM}), while approximate controllability means that the system can be steered to arbitrary small neighborhood of final state. Approximate controllable systems are more prevalent and very often approximate controllability is completely adequate in applications, see for instance \cite{CR,M,AB,J,KS}, etc. Therefore, it is important, in fact necessary to study the weaker concept of controllability, namely approximate controllability for nonlinear systems.

 In the recent literature, there have been a few papers on the approximate controllability of the nonlinear evolution systems under different conditions, see for example \cite{M,RS,RS1,RS2}, etc. In \cite{DM}, Dauer and Mahmudov investigated the approximate controllability of a functional differential equation with compact semigroup, using the Schauder's fixed point theorem. The authors used the Banach fixed theorem to obtain the approximate controllability results, when the semigroup is not compact. Fu and Mei \cite{XF} examined the approximate controllability of semilinear neutral functional differential systems with finite delay. Later, Sakthivel et.al., \cite{G} studied the controllability of a semilinear integrodifferential equation in Banach spaces. Approximate controllability of non-autonomous semilinear systems in Hilbert spaces  with various conditions can be obtained from \cite{GRK,XF2}, etc. Fu in \cite{XF1} investigated the approximate controllability of semilinear non-autonomous evolution systems  in Hilbert spaces with state-dependent delay.  Using the resolvent operators, the approximate controllability results for fractional differential equations in Hilbert spaces is explored by Fan in \cite{F}. Mishra and Sharma in \cite{MI} investigated the approximate controllability of a non-autonomous functional differential equation in Hilbert spaces using the theory of linear evolution system, Schauder's fixed point theorem, and using resolvent operators. Chen et. al. in \cite{CZL} obtained the approximate controllability for a class of non-autonomous evolution system of parabolic type with nonlocal conditions in Banach spaces. But the resolvent operator defined in the work  \cite{CZL} is applicable only in the case of Hilbert spaces (see \eqref{2.2} below). So, it appears to the authors that the results announced in the paper \cite{CZL} are valid only in separable Hilbert spaces.  We make use of the techniques adopted in \cite{F1,F,MI} to establish the approximate controllability of a non-autonomous nonlinear evolution equation in reflexive, separable  Banach spaces. The novelty of the work is that, it provides a systematic approach of approximate controllability of non-autonomous nonlinear evolution systems in reflexive Banach spaces. 
 
 Let $\mathbb{X}$ be a separable, reflexive Banach space (with a strictly convex dual) and $\mathbb{H}$ be a separable Hilbert space. In this paper, we examine the approximate controllability of the following non-autonomous, nonlinear  evolution differential system:   
\begin{equation}\label{1}
\left\{
\begin{aligned}
\dot{x}(t)&= \mathrm{A}(t)x(t)+f(t,x(t))+\mathrm{B}u(t),\ t\in[0,T],\\
x(0)&=x_{0},
\end{aligned}
\right.
\end{equation} 
where $f:[0,T]\times \mathbb{X}\rightarrow \mathbb{X}$, $\mathrm{A}(\cdot)$ is a linear operator on $\mathbb{X}$, $\mathrm{B}$ is a  bounded linear operator from $\mathbb{H}$ to $\mathbb{X}$ and $x_0\in\mathbb{X}$. The control function $u(\cdot)$ is given in space $\mathrm{L}^{2}([0,T];\mathbb{H}),$ which is a Hilbert space of admissible control functions.

The rest of the paper is organized as follows. In the next section, we provide some necessary definitions and results required to develop the theory for the approximate controllability of the non-autonomous system \eqref{1}. Section \ref{sec3} is devoted for the approximate controllability of  linear  problem corresponding to the system \eqref{1}. In order to do this obtain the control in feedback form, we first formulate an optimal control problem and establish the existence of an optimal control (Theorem \ref{optimal}).  Using this optimal control, we derive the feedback control needed to establish the approximate controllability of linear non-autonomous system (Lemma \ref{lem3.1} and Theorem \ref{thm3.2}). In section \ref{sec4}, we consider the the approximate controllability of non-autonomous nonlinear  evolution differential system. We make use of the method of resolvent operators and Schauder's fixed point theorem to study the approximate controllability of a non-autonomous evolution equation in reflexive Banach spaces (Theorems \ref{thm4.1} and \ref{thm4.2}). Finally, in section \ref{sec5}, we give an example of a non-autonomous, nonlinear diffusion equation to validate the theory that we developed in sections \ref{sec3} and \ref{sec4}.

\section{Preliminaries}\label{sec2}\setcounter{equation}{0}
In this section, we introduce some basic definitions and notations, which are going to be used throughout the paper. As discussed in the previous section, $\mathbb{X}$ denotes a separable, reflexive Banach space and $\mathbb{H}$ denotes a separable Hilbert space.  The norms in $\mathbb{X}$, $\mathbb{X}'$ and $\mathbb{H}$ are denoted by $\|\cdot\|_{\mathbb{X}}$, $\|\cdot\|_{\mathbb{X}'}$ and $\|\cdot\|_{\mathbb{H}}$, respectively. The inner product in $\mathbb{H}$ is denoted by $(\cdot,\cdot)$ and the duality pairing between $\mathbb{X}$ and its topological dual $\mathbb{X}'$ is denoted by $\langle\cdot,\cdot\rangle$. Remember that a reflexive Banach space is separable if and only if its dual is separable. Thus, since $\mathbb{X}$ is a separable, reflexive Banach space, its dual $\mathbb{X}'$ is separable. The space of all bounded linear operators from $\mathbb{H}$ to $\mathbb{X}$ is denoted by $\mathcal{L}(\mathbb{H};\mathbb{X})$ and the operator norm is denoted by $\|\cdot\|_{\mathcal{L}(\mathbb{H};\mathbb{X})}$. By $\mathcal{L}(\mathbb{X}),$ we mean the set of all bounded linear operators defined on $\mathbb{X}$ and the operator norm is denoted by $\|\cdot\|_{\mathcal{L}(\mathbb{X})}$. Also, we denote $\mathcal{K}(\mathbb{X})$ as the space of all compact linear operators on $\mathbb{X}$. 

\subsection{The duality mapping} We define a mapping $\mathrm{J}:\mathbb{X}\to2^{\mathbb{X}'}$ by (see \cite{VB})
$$\mathrm{J}[x]=\left\{x'\in\mathbb{X}':\langle x,x'\rangle=\|x\|_{\mathbb{X}}^2=\|x'\|_{\mathbb{X}'}^2 \right\}, \ \text{ for all } \ x\in\mathbb{X}.$$ The mapping $\mathrm{J}$ is called the \emph{duality mapping} of the space $\mathbb{X}$. Note that duality map on $\mathbb{X}$ satisfies $\mathrm{J}[\lambda x] = \lambda \mathrm{J}[x]$, for all $\lambda\in\mathbb{R}$ and $x\in\mathbb{X}$. Moreover, $\mathrm{J}^{-1}:\mathbb{X}'\to\mathbb{X}$ is also a duality mapping. For example, if $\mathbb{X}=\mathbb{H}$ is a Hilbert space identified with its own dual, then $\mathrm{J}=\mathrm{I}$, the identity operator in $\mathbb{H}$. If $\mathbb{X}=\mathbb{L}^p(\Omega)$, where $1<p<\infty$ and $\Omega$ is a measurable subset of $\mathbb{R}^n$. Then the duality mapping of $\mathbb{X}$ is given by 
\begin{align*}
\mathrm{J}[v](y)=|v(y)|^{p-2}v(y)\|v\|_{\mathbb{L}^p(\Omega)}^{2-p}, \ \text{ a.e. }\ y\in\Omega, \ \text{ for all }\ v\in\mathbb{L}^p(\Omega). 
\end{align*} Since the space $\mathbb{X}$ is reflexive, $\mathbb{X}$ can be renormed such that $\mathbb{X}$ and $\mathbb{X}'$ becomes strictly convex (\cite{AE}). From the strict convexity of $\mathbb{X}'$, we obtain that the duality mapping $\mathrm{J}:\mathbb{X}\to\mathbb{X}'$  is single valued and demicontinuous, that is, $$x_k\to x\ \text{ in }\ \mathbb{X}\ \text{
implies }\ \mathrm{J}[x_k] \xrightharpoonup{w} \mathrm{J}[x]\ \text{ in } \ \mathbb{X}'.$$ Moreover, if the space $\mathbb{X}'$ is uniformly convex (that is, $\mathbb{X}$ is uniformly smooth), then $\mathrm{J}$ is uniformly continuous on every bounded subset of $\mathbb{X}$  (see Theorem 1.2, \cite{VB}). It should be noted that every uniformly convex space $\mathbb{X}$ is strictly convex and by using Milman theorem (see \cite{YK}), every uniformly convex Banach space is reflexive (that is, $\mathbb{X}''=\mathbb{X}$). 

Let us now discuss about the differentiability of the map $x\mapsto\frac{1}{2}\|x\|_{\mathbb{X}}^2$. Let $\phi: \mathbb{X}\to\mathbb{R}$ be defined by $\phi(x)=\frac{1}{2}\|x\|_{\mathbb{X}}^2$. If $\mathbb{X}'$ is strictly convex then $\phi$ is G\^ateaux differentiable, and if $\mathbb{X}'$ is uniformly convex, then $\phi$ is Fr\'echet differentiable. In both cases the derivative is the duality map (see Theorem 2.1, \cite{BJM}). That is, we have $$\langle\partial_x\phi(x),y\rangle=\frac{1}{2}\frac{\mathrm{d}}{\mathrm{d}\varepsilon}\|x+\varepsilon y\|_{\mathbb{X}}^2\Big|_{\varepsilon=0}=\langle\mathrm{J}[x],y\rangle,$$ for $y\in\mathbb{X}$, where $\partial_x$ denotes the G\^ateaux derivative.

\subsection{The two parameter family of semigroups} In this subsection, we construct a two parameter family of semigroup under some assumptions on the operator $\left\{\mathrm{A}(t): t\in [0,T]\right\}$. 
 These assumptions are taken from section 5.6, Chapter 5, \cite{P} (see \cite{MI} also). 
 \begin{Ass}\label{ass2.1}
 Let $\left\{\mathrm{A}(t): t\in [0,T]\right\}$ be a family of operators satisfying the following assumptions :
 \begin{enumerate}
\item [(P1)]  The linear operator $\mathrm{A}(t)$ is a closed and the domain $\mathrm{D}(\mathrm{A}(t))=\mathrm{D}$ of $\mathrm{A}(t)$ is dense in $\mathbb{X}$ and independent of $t\in [0,T]$.
\item [(P2)] For each $t\in [0,T]$, the resolvent $\mathrm{R}(\lambda,\mathrm{A}(t))$ exists for all $\lambda$ with $\text{Re }\lambda \geq 0$ and there exists $M> 0$ such that 
\begin{align*}
\left\|\mathrm{R}(\lambda,\mathrm{A}(t))\right\|_{\mathcal{L}(\mathbb{X})}\leq \frac{M}{\left|\lambda\right|+1}.
\end{align*}
\item [(P3)] There exists constants $L>0$ and $0<\alpha \leq 1$ such that for all $t,s,\tau \in [0,T]$, we have 
\begin{align*}
\left\|(\mathrm{A}(t)-\mathrm{A}(s))\mathrm{A}^{-1}(\tau)\right\|_{\mathcal{L}(\mathbb{X})}\leq L\left|t-s\right|^{\alpha}.
\end{align*}
\item [(P4)] For each $t\in [0,T]$ and some $\lambda \in \rho(\mathrm{A}(t))$, the resolvent operator $\mathrm{R}(\lambda,\mathrm{A}(t))$ is  compact.
\end{enumerate}
\end{Ass}
Let us now provide the definition of evolution system and state its properties. 
\begin{Def}[\cite{P}]
A two parameter family of bounded linear operators $$\left\{\mathrm{U}(t,s):0\leq s\leq t\leq T\right\},$$ on $\mathbb{X}$ is called an \emph{evolution system} if the following two conditions are satisfied:
\begin{enumerate}
\item [(1)] $\mathrm{U}(s,s)=\mathrm{I}$, $\mathrm{U}(t,r)\mathrm{U}(r,s)=\mathrm{U}(t,s)$ for $0\leq s\leq r \leq t \leq T$.
\item [(2)] $(t,s)\mapsto \mathrm{U}(t,s)$ is strongly continuous for $0\leq s\leq t\leq T$.
\end{enumerate}
\end{Def}
\begin{lem}[Theorem 6.1, Chapter 5, \cite{P}]\label{lem2.1}
Under the assumptions, (P1)-(P3), there is a \emph{unique evolution system} $\mathrm{U}(t,s)$ on $0\leq s\leq t\leq T$, satisfying
\begin{enumerate}
\item [(1)] For $0\leq s\leq t\leq T$, we have
\begin{align*}
\left\|\mathrm{U}(t,s)\right\|_{\mathcal{L}(\mathbb{X})}\leq C.
\end{align*}
\item [(2)] For $0\leq s\leq t\leq T$, $\mathrm{U}(t,s):\mathbb{X}\rightarrow \mathrm{D}$ and $t\mapsto \mathrm{U}(t,s)$ is strongly differentiable in $\mathbb{X}$. The derivative $\frac{\partial}{\partial t}\mathrm{U}(t,s)\in \mathcal{L}({\mathbb{X}})$ and is strongly continuous on $0\leq s\leq t\leq T$. Moreover, we also have
\begin{align*}
\frac{\partial}{\partial t}\mathrm{U}(t,s)+\mathrm{A}(t)\mathrm{U}(t,s)=0, \ \text{ for } \ 0\leq s\leq t\leq T,
\end{align*}
\begin{align*}
\left\|\frac{\partial}{\partial t}\mathrm{U}(t,s)\right\|_{\mathcal{L}(\mathbb{X})}=\left\|\mathrm{A}(t)\mathrm{U}(t,s)\right\|_{\mathcal{L}(\mathbb{X})}\leq \frac{C}{t-s},
\end{align*}
and
\begin{align*}
\left\|\mathrm{A}(t)\mathrm{U}(t,s)\mathrm{A}(s)^{-1}\right\|_{\mathcal{L}(\mathbb{X})}\leq C, \ \text{ for }\ 0\leq s\leq t\leq T.
\end{align*}
\item [(3)] For every $v\in \mathrm{D}$ and $t\in [0,T]$, $\mathrm{U}(t,s)v$ is differentiable with respect to $s$ on $0\leq s\leq t\leq T$ and
\begin{align*}
\frac{\partial}{\partial s}\mathrm{U}(t,s)v=\mathrm{U}(t,s)\mathrm{A}(s)v.
\end{align*}
\end{enumerate}
\end{lem}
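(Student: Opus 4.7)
My plan is to follow the classical Tanabe--Kato construction, as presented in Chapter~5 of Pazy~\cite{P}. First, I would note that (P1) together with (P2) implies that for each fixed $t\in[0,T]$ the operator $-\mathrm{A}(t)$ is the infinitesimal generator of a bounded analytic $C_0$-semigroup $\{e^{-\sigma\mathrm{A}(t)}\}_{\sigma\geq 0}$ on $\mathbb{X}$, and that one has the uniform (in $t$) estimates
\begin{align*}
\|e^{-\sigma\mathrm{A}(t)}\|_{\mathcal{L}(\mathbb{X})} \leq M_1, \qquad \|\mathrm{A}(t)\,e^{-\sigma\mathrm{A}(t)}\|_{\mathcal{L}(\mathbb{X})} \leq \frac{M_1}{\sigma}, \qquad \sigma>0.
\end{align*}

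Next, I would seek $\mathrm{U}(t,s)$ in the ansatz
\begin{align*}
\mathrm{U}(t,s) = e^{-(t-s)\mathrm{A}(s)} + \int_s^t e^{-(t-\tau)\mathrm{A}(\tau)}\,\Phi(\tau,s)\,\mathrm{d}\tau,
\end{align*}
with the auxiliary kernel $\Phi$ to be determined. Formally differentiating in $t$ and imposing $\partial_t \mathrm{U}(t,s) + \mathrm{A}(t)\mathrm{U}(t,s) = 0$ produces the Volterra integral equation
\begin{align*}
\Phi(t,s) = R_1(t,s) + \int_s^t R_1(t,\tau)\,\Phi(\tau,s)\,\mathrm{d}\tau,
\end{align*}
with $R_1(t,s) := [\mathrm{A}(s)-\mathrm{A}(t)]\,e^{-(t-s)\mathrm{A}(s)}$. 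Writing $R_1(t,s) = [\mathrm{A}(s)-\mathrm{A}(t)]\mathrm{A}(s)^{-1}\cdot\mathrm{A}(s)\,e^{-(t-s)\mathrm{A}(s)}$ and combining the H\"older hypothesis (P3) with the analyticity bound above yields the singular-kernel estimate $\|R_1(t,s)\|_{\mathcal{L}(\mathbb{X})} \leq C(t-s)^{\alpha-1}$. I would then solve the Volterra equation by the Neumann series $\Phi = \sum_{n=1}^{\infty} R_n$ with $R_{n+1}(t,s) := \int_s^t R_1(t,\tau)R_n(\tau,s)\,\mathrm{d}\tau$, and establish absolute convergence via the Beta-function induction $\|R_n(t,s)\|_{\mathcal{L}(\mathbb{X})} \leq (C\,\Gamma(\alpha))^n (t-s)^{n\alpha-1}/\Gamma(n\alpha)$.

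With $\mathrm{U}(t,s)$ in hand, item~(1) follows from the uniform semigroup bound together with the convergent series. Item~(2) is obtained by differentiating the ansatz in $t$: the equation $\partial_t \mathrm{U}(t,s) + \mathrm{A}(t)\mathrm{U}(t,s) = 0$ emerges from the Volterra identity, while the bounds on $\|\mathrm{A}(t)\mathrm{U}(t,s)\|_{\mathcal{L}(\mathbb{X})}$ and $\|\mathrm{A}(t)\mathrm{U}(t,s)\mathrm{A}(s)^{-1}\|_{\mathcal{L}(\mathbb{X})}$ follow from the analytic semigroup estimates combined with (P3). Item~(3) is obtained by differentiating the ansatz in $s$ applied to $v\in\mathrm{D}$, and the uniqueness assertion follows from a standard Gronwall argument applied to the difference of two candidate evolution systems.

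The principal obstacle is the singular kernel $R_1(t,s)$, which blows up like $(t-s)^{\alpha-1}$ at the diagonal: controlling the Neumann series requires the Gamma/Beta-function bookkeeping above, and justifying term-by-term differentiation of the integral defining $\mathrm{U}$ demands care with the integrability of the singular integrand near $\tau=t$, typically handled by splitting the integral and exploiting the extra smoothing of $e^{-(t-\tau)\mathrm{A}(\tau)}$ away from $\tau=t$.
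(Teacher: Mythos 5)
The paper offers no proof of this lemma at all---it is quoted directly from Theorem 6.1, Chapter 5 of Pazy \cite{P}---and your sketch correctly reproduces the Tanabe--Kato parametrix construction that constitutes the proof in that reference: the analytic-semigroup bounds from (P1)--(P2), the ansatz $\mathrm{U}(t,s)=e^{-(t-s)\mathrm{A}(s)}+\int_s^t e^{-(t-\tau)\mathrm{A}(\tau)}\Phi(\tau,s)\,\mathrm{d}\tau$, the singular-kernel Volterra equation obtained via (P3), and the Gamma-function bookkeeping for the Neumann series. Your approach is therefore essentially the same as the cited proof on which the paper relies.
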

\begin{lem}[Proposition 2.1, \cite{WF}]\label{lem2.2}
Let $\left\{\mathrm{A}(t):t\in [0,T]\right\}$ satisfy the condition (P1)-(P4). If $\left\{\mathrm{U}(t,s):0\leq s\leq t\leq T\right\}$ is the linear evolution system generated by the family of operators $\left\{\mathrm{A}(t):t\in [0,T]\right\}$, then $\left\{\mathrm{U}(t,s):0\leq s\leq t\leq T\right\}$ is \emph{a compact operator} whenever $t-s> 0$.
\end{lem}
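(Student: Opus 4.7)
The plan is to factor the evolution operator $\mathrm{U}(t,s)$ as a composition of a compact operator with a uniformly bounded one, and then invoke the ideal property of compact operators. Concretely, for $0\leq s<t\leq T$, I would write
\[
\mathrm{U}(t,s) \;=\; \mathrm{A}(t)^{-1}\cdot\bigl[\mathrm{A}(t)\mathrm{U}(t,s)\bigr],
\]
where the identity is justified because Lemma~\ref{lem2.1}(2) guarantees $\mathrm{U}(t,s)\mathbb{X}\subset\mathrm{D}$, so $\mathrm{A}(t)(\mathrm{U}(t,s)x)$ makes sense for every $x\in\mathbb{X}$ and $\mathrm{A}(t)^{-1}\mathrm{A}(t)$ is the identity on $\mathrm{D}$.

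The first ingredient is compactness of $\mathrm{A}(t)^{-1}$. Assumption (P2) with $\lambda=0$ gives $0\in\rho(\mathrm{A}(t))$ and hence $\mathrm{A}(t)^{-1}=-\mathrm{R}(0,\mathrm{A}(t))\in\mathcal{L}(\mathbb{X})$ with $\|\mathrm{A}(t)^{-1}\|_{\mathcal{L}(\mathbb{X})}\leq M$. Assumption (P4) produces some $\lambda_0\in\rho(\mathrm{A}(t))$ at which $\mathrm{R}(\lambda_0,\mathrm{A}(t))$ is compact; but the first resolvent identity
\[
\mathrm{R}(0,\mathrm{A}(t))-\mathrm{R}(\lambda_0,\mathrm{A}(t))=\lambda_0\,\mathrm{R}(0,\mathrm{A}(t))\mathrm{R}(\lambda_0,\mathrm{A}(t))
\]
exhibits $\mathrm{R}(0,\mathrm{A}(t))$ as a sum of a compact operator and the composition of a bounded operator with a compact one, so $\mathrm{R}(0,\mathrm{A}(t))\in\mathcal{K}(\mathbb{X})$ and therefore $\mathrm{A}(t)^{-1}\in\mathcal{K}(\mathbb{X})$.

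The second ingredient is the bound
\[
\bigl\|\mathrm{A}(t)\mathrm{U}(t,s)\bigr\|_{\mathcal{L}(\mathbb{X})}\;\leq\;\frac{C}{t-s},
\]
supplied directly by Lemma~\ref{lem2.1}(2) whenever $t-s>0$. Since $\mathcal{K}(\mathbb{X})$ is a two-sided ideal in $\mathcal{L}(\mathbb{X})$, the product $\mathrm{A}(t)^{-1}\cdot[\mathrm{A}(t)\mathrm{U}(t,s)]$ is compact, which delivers the claim.

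The only delicate point is the justification of the algebraic identity $\mathrm{U}(t,s)=\mathrm{A}(t)^{-1}\mathrm{A}(t)\mathrm{U}(t,s)$ on all of $\mathbb{X}$; this is where the regularizing property $\mathrm{U}(t,s):\mathbb{X}\to\mathrm{D}$ from Lemma~\ref{lem2.1}(2) is essential and where the hypothesis $t>s$ (rather than $t\geq s$) is used, since at $t=s$ we only have $\mathrm{U}(s,s)=\mathrm{I}$, which is not compact in infinite dimensions. Everything else is routine ideal-of-compact-operators book-keeping, so I do not anticipate any further obstacle.
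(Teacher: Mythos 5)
Your proof is correct. The paper does not actually prove this lemma---it imports it verbatim as Proposition 2.1 of \cite{WF}---and your factorization $\mathrm{U}(t,s)=\mathrm{A}(t)^{-1}\left[\mathrm{A}(t)\mathrm{U}(t,s)\right]$, combining compactness of $\mathrm{A}(t)^{-1}$ (via (P2), (P4) and the first resolvent identity) with the smoothing bound $\|\mathrm{A}(t)\mathrm{U}(t,s)\|_{\mathcal{L}(\mathbb{X})}\leq C/(t-s)$ from Lemma \ref{lem2.1} and the ideal property of $\mathcal{K}(\mathbb{X})$, is precisely the standard argument behind that cited result, with the one genuinely delicate point (why $t>s$ is needed for $\mathrm{U}(t,s)\mathbb{X}\subset\mathrm{D}$) correctly identified.
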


\subsection{Mild solution}Let us now give the definition of mild solution of the system \eqref{1} and state the assumptions of $f(\cdot,\cdot)$, for which \eqref{1} possesses a mild solution. 
\begin{Def}
	A function $x\in \mathrm{C}([0,T];\mathbb{X})$ is called a \emph{mild solution} of \eqref{1}, for each $0\leq t\leq T$ and $s\in [0,t)$, if it satisfies the following equation
	\begin{align}\label{2}
	x(t)= \mathrm{U}(t,0)x_{0}+\int^{t}_{0}\mathrm{U}(t,s)f(s,x(s))\mathrm{d}s+\int^{t}_{0}\mathrm{U}(t,s)\mathrm{B}u(s)\mathrm{d}s.
	\end{align}
\end{Def}

In order to obtain the unique mild solution of the system \eqref{1}, we need the following assumptions which are sufficient conditions also. 
\begin{Ass}\label{ass2.2}
		The function $f$ and the operator $\mathrm{B}$ satisfies the following assumptions: 
\begin{enumerate}
\item [(A1)] The function $f:[0,T]\times \mathbb{X}\rightarrow \mathbb{X}$ is continuous and there exists a positive constant $K$ such that
\begin{align*}
	\left\|f(t,x)\right\|_{\mathbb{X}}\leq K, \ \text{ for all } \ (t,x)\in [0,T]\times \mathbb{X}.
\end{align*}  
\item [(A2)] The operator $\mathrm{B}: \mathbb{H}\rightarrow \mathbb{X}$ is a bounded linear operator with $\left\|\mathrm{B}\right\|_{\mathcal{L}(\mathbb{H};\mathbb{X})}=N$, $N> 0$.
\end{enumerate}
\end{Ass}
Under the above assumptions, it can be easily seen that 
\begin{align*}
\int_0^T\|f(t,x)\|_{\mathbb{X}}\mathrm{d}t&\leq KT<+\infty, \\
\int_0^T\|\mathrm{B}u(t)\|_{\mathbb{X}}\mathrm{d} t&\leq \|\mathrm{B}\|_{\mathcal{L}(\mathbb{H};\mathbb{X})}\int_0^T\|u(t)\|_{\mathbb{H}}\mathrm{d}t\leq NT^{1/2}\left(\int_0^T\|u(t)\|_{\mathbb{H}}^2\mathrm{d}t\right)^{1/2}<+\infty,
\end{align*}
and hence $f,\mathrm{B}u\in\mathrm{L}^1([0,T];\mathbb{X})$. As discussed in section 5.7, Chapter 5, \cite{P}, we obtain \emph{a unique mild solution} of the system \eqref{1}. Moreover, we prove the existence of such a solution for a particular $u(\cdot)$ (in fact in the feedback form) in the next section.  Let $x(T;x_0,u)$ be the state value of the system \eqref{1} at terminal state $T$, corresponding to the control $u$ and the initial value $x_0$.
\begin{Def}
For $x_0\in\mathbb{X}$, a set $\mathscr{R}_{T}(x_0)$ is called `\emph{reachable set}' of the system \eqref{1}, defined as follows:
\begin{equation*}
  \mathscr{R}_{T}(x_0) =
    \begin{cases}
     x(T)= x(T;x_0,u)\in \mathbb{X}: u\in \mathrm{L}^{2}([0,T];\mathbb{H}), \text{ and }\\
      x(\cdot)\text{ is a mild solution of \eqref{1} with control }u.\\
      
    \end{cases}   
\end{equation*}
\end{Def}
\begin{Def}
	The system \eqref{1} is said to be approximately controllable on the interval $[0,T]$, if $\overline{\mathscr{R}_T(x_0)}=\mathbb{X},$ where $\overline{\mathscr{R}_T(x_0)}$ is closure of ${\mathscr{R}_T(x_0)}$ in $\mathbb{X}$. 
\end{Def}
Let $\mathrm{B}^{*}$, $\mathrm{U}(T,s)^{*}$ denote the adjoint operators of $\mathrm{B}$ and $\mathrm{U}(T,s)$, respectively.  In this paper, we  need the following important assumption also (see \cite{M}) to establish the approximate controllability results for the system \eqref{1}. 
\begin{Ass}\label{ass2.3}
	We assume that 
\begin{enumerate}
	\item [(A3)] for every $h\in\mathbb{X}$, $z_{\lambda}(h)=\lambda(\lambda \mathrm{I}+\Lambda_{T}\mathrm{J})^{-1}(h)$ converges to zero as $\lambda\downarrow 0$ in strong topology, where 
\begin{equation}\label{2.2}
\left\{
\begin{aligned}
L_Tu&:=\int_0^T\mathrm{U}(T,t)\mathrm{B}u(t)\mathrm{d}t,\\
\Lambda_{T}&:=\int^{T}_{0}\mathrm{U}(T,t)\mathrm{B}\mathrm{B}^{*}\mathrm{U}(T,t)^{*}\mathrm{d}t=L_T(L_T)^*,\\
\mathrm{R}(\lambda,\Lambda_{T})&:=(\lambda \mathrm{I}+\Lambda_{T}\mathrm{J})^{-1},\ \lambda > 0,
\end{aligned}
\right.
\end{equation}
and $z_{\lambda}(h)$ is is a solution of the equation \begin{align}\label{24}\lambda z_{\lambda}+\Lambda_T\mathrm{J}[z_{\lambda}]=\lambda h.\end{align}
\end{enumerate}
\end{Ass}
If $\mathbb{X}$ is a separable Hilbert space, then one can define the resolvent operator as $\mathrm{R}(\lambda,\Lambda_{T})=(\lambda \mathrm{I}+\Lambda_{T})^{-1}$. 
Since $\mathbb{X}$ is a separable, reflexive Banach space, from Lemma 2.2, \cite{M}, we know that for every $h\in\mathbb{X}$ and $\lambda>0$, the equation \eqref{24} has a unique solution  $z_{\lambda}(h)=\lambda(\lambda \mathrm{I}+\Lambda_{T}\mathrm{J})^{-1}(h)=\lambda\mathrm{R}(\lambda,\Lambda_T)(h)$ and \begin{align}\label{25}\|z_{\lambda}(h)\|_{\mathbb{X}}=\|\mathrm{J}[z_{\lambda}(h)]\|_{\mathbb{X}'}\leq\|h\|_{\mathbb{X}}.
\end{align}

From Theorem 2.3, \cite{M}, we also obtain that $z_{\lambda}(h)=\lambda(\lambda \mathrm{I}+\Lambda_{T}\mathrm{J})^{-1}(h)$ converges to zero as $\lambda\downarrow 0$ in strong operator topology if and only if $\Lambda_T$ is positive; that is, $\langle x', \Lambda_T x' \rangle =\|(L_T)^*x'\|_{\mathbb{H}}^2>0,$ for all nonzero $x'\in\mathbb{X}'$. 

\section{Linear Non-autonomous Control Problem} \label{sec3}\setcounter{equation}{0}
In this section, we consider the linear problem corresponding to the system \eqref{1}, formulate an optimal control problem and discuss about its connection to the approximate controllability of the linear system. 
\subsection{The optimal control problem for the linear system} In this subsection, we consider a linear regulator problem, consisting of minimizing a cost functional. Our aim is to find the optimal control $u$, which is used in the approximate control system. The cost functional is given by 
\begin{equation}\label{3}
\mathcal{J}(x,u)=\left\|x(T)-x_{T}\right\|^{2}_{\mathbb{X}}+\lambda\int^{T}_{0}\left\|u(t)\right\|^{2}_{\mathbb{H}}\mathrm{d}t,
\end{equation}
where $x(\cdot)$ is the solution of the linear system 
\begin{equation}\label{2.3}
\left\{
\begin{aligned}
\dot{x}(t)&= \mathrm{A}(t)x(t)+\mathrm{B}u(t),\ t\in[0,T],\\
x(0)&=x_{0},
\end{aligned}
\right.
\end{equation} 
with control $u$, $x_{T}\in \mathbb{X}$ and $\lambda >0$. 
	We take the admissible control  class as $$\mathscr{U}_{\text{ad}}=\mathrm{L}^{2}([0,T];\mathbb{H}),$$ consisting of controls $u$. Since $\mathrm{B}u\in\mathrm{L}^1([0,T];\mathbb{X})$, the system \eqref{2.3} has a unique mild solution given by 
	\begin{align}\label{2.4}
	x(t)= \mathrm{U}(t,0)x_{0}+\int^{t}_{0}\mathrm{U}(t,s)\mathrm{B}u(s)\mathrm{d}s,
	\end{align}
	for any $u\in\mathscr{U}_{\text{ad}}$. 
\begin{Def}[Admissible class]\label{definition 1}
	The \emph{admissible class} $\mathscr{A}_{\text{ad}}$ of pairs $(x,u)$ is defined as the set of states $x$ solving the system \eqref{2.3} with control $u \in \mathscr{U}_{ad}$. That is,
	\begin{align*}
	\mathscr{A}_{\text{ad}}:=\Big\{(x,u) :x\text{ is \text{a unique mild solution} of }\eqref{2.3}  \text{ with control }u\in\mathscr{U}_{\text{ad}}\Big\}.
	\end{align*}
\end{Def}
Note that $\mathscr{A}_{\text{ad}}$ is  a nonempty set as for any $u \in \mathscr{U}_{\text{ad}}$, there exists a \emph{unique mild solution} of the system \eqref{2.3}. In view of the above definition, the optimal control problem we are considering can be  formulated as:
\begin{align}\label{4}
\min_{ (x,u) \in \mathscr{A}_{\text{ad}}}  \mathcal{J}(x,u).
\end{align}

	A solution to the problem \eqref{4} is called an \emph{optimal solution}. The optimal pair is denoted by $({x}^0, u^0)$. The control $u^0$ is called an \emph{optimal control}.

\begin{thm}[Existence of an optimal pair]\label{optimal}
	Let  $x_0\in\mathbb{X}$ be given.  Then there exists at least one pair  $(x^0,u^0)\in\mathscr{A}_{\text{ad}}$  such that the functional $\mathcal{J}(x,u)$ attains its minimum at $(x^0,u^0)$, where $x^0$ is the unique mild solution of the system  \eqref{2.3}  with the control $u^0$.
\end{thm}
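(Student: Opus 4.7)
The plan is to invoke the direct method of the calculus of variations, exploiting the Hilbert space structure of $\mathscr{U}_{\text{ad}} = \mathrm{L}^2([0,T];\mathbb{H})$ together with the convexity and coercivity of $\mathcal{J}$ in $u$. First, I would observe that $\mathcal{J}(x,u)\geq 0$ for every $(x,u)\in\mathscr{A}_{\text{ad}}$, and $\mathcal{J}$ is bounded above on $\mathscr{A}_{\text{ad}}$ (e.g., by the value at $u\equiv 0$, where $x(t)=\mathrm{U}(t,0)x_0$); hence $m:=\inf_{(x,u)\in\mathscr{A}_{\text{ad}}}\mathcal{J}(x,u)$ is a finite nonnegative real. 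I would then pick a minimizing sequence $\{(x^n,u^n)\}\subset\mathscr{A}_{\text{ad}}$ with $\mathcal{J}(x^n,u^n)\downarrow m$.

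From the second term in $\mathcal{J}$, the sequence $\{u^n\}$ is bounded in $\mathrm{L}^2([0,T];\mathbb{H})$. Since $\mathrm{L}^2([0,T];\mathbb{H})$ is a Hilbert space, after passing to a subsequence (not relabelled) we may assume $u^n \rightharpoonup u^0$ weakly in $\mathrm{L}^2([0,T];\mathbb{H})$. I would define $x^0(\cdot)$ to be the mild solution of \eqref{2.3} corresponding to $u^0$, so that $(x^0,u^0)\in\mathscr{A}_{\text{ad}}$ by Definition \ref{definition 1}.

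To pass to the limit in the terminal-state term of $\mathcal{J}(x^n,u^n)$, I would use that the operator $L_T:\mathrm{L}^2([0,T];\mathbb{H})\to\mathbb{X}$ defined in \eqref{2.2} is bounded -- this follows from Assumption \ref{ass2.2}(A2) and the uniform bound $\|\mathrm{U}(T,s)\|_{\mathcal{L}(\mathbb{X})}\leq C$ from Lemma \ref{lem2.1}(1) together with Cauchy--Schwarz -- and hence weakly continuous. Therefore $x^n(T)=\mathrm{U}(T,0)x_0+L_Tu^n \rightharpoonup \mathrm{U}(T,0)x_0+L_Tu^0=x^0(T)$ weakly in $\mathbb{X}$. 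The weak lower semicontinuity of the norm in both $\mathbb{X}$ (reflexive) and $\mathrm{L}^2([0,T];\mathbb{H})$ then yields
\[
\|x^0(T)-x_T\|_{\mathbb{X}}^2\leq\liminf_{n\to\infty}\|x^n(T)-x_T\|_{\mathbb{X}}^2,\quad \int_0^T\!\|u^0(t)\|_{\mathbb{H}}^2\mathrm{d}t\leq\liminf_{n\to\infty}\int_0^T\!\|u^n(t)\|_{\mathbb{H}}^2\mathrm{d}t.
\]
Adding these inequalities gives $\mathcal{J}(x^0,u^0)\leq\liminf_{n\to\infty}\mathcal{J}(x^n,u^n)=m$, and since $(x^0,u^0)\in\mathscr{A}_{\text{ad}}$ forces $\mathcal{J}(x^0,u^0)\geq m$, equality holds and $(x^0,u^0)$ is the desired optimal pair.

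The only delicate point I foresee is justifying $L_Tu^n\rightharpoonup L_Tu^0$ weakly in $\mathbb{X}$; but this is immediate from the boundedness of $L_T$ noted above. (In fact, Lemma \ref{lem2.2} implies $\mathrm{U}(T,s)$ is compact for $T>s$, so $L_T$ is even a compact operator and the convergence is strong, although this stronger statement is not needed here.) All other steps are a textbook application of the direct method.
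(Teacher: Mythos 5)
Your proposal is correct, and it is leaner than the paper's own argument. Both proofs are instances of the direct method: fix a minimizing sequence, extract a weakly convergent subsequence of controls in $\mathrm{L}^2([0,T];\mathbb{H})$, and pass to the limit. The difference lies in how the terminal term $\|x^n(T)-x_T\|_{\mathbb{X}}^2$ is handled. The paper extracts a weak limit of the whole trajectories in $\mathrm{L}^2([0,T];\mathbb{X})$, upgrades this to strong convergence $x^{n_k}\to x^0$ in $\mathrm{C}([0,T];\mathbb{X})$ by exploiting the compactness of the Cauchy operator (Lemma \ref{lem4.1}, which rests on hypothesis (P4)), and then invokes weak lower semicontinuity of the convex functional $\mathcal{J}$ on the product space. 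You instead observe that $x^n(T)=\mathrm{U}(T,0)x_0+L_Tu^n$ is an affine bounded --- hence weakly continuous --- function of $u^n$ alone, so $x^n(T)\rightharpoonup x^0(T)$ in $\mathbb{X}$ follows directly from $u^n\rightharpoonup u^0$, and weak lower semicontinuity of the two norms finishes the job. Your route needs neither the compactness hypothesis (P4) nor any convergence of the trajectories away from $t=T$, and it also sidesteps a delicate point in the paper's phrasing: the evaluation $x\mapsto x(T)$ is not a continuous functional on $\mathrm{L}^2([0,T];\mathbb{X})$, so the paper's appeal to weak lower semicontinuity of $\mathcal{J}$ as a convex continuous functional on $\mathrm{L}^2([0,T];\mathbb{X})\times\mathrm{L}^2([0,T];\mathbb{H})$ is only legitimate because the strong uniform convergence of the trajectories has already been secured at that stage. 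What the paper's heavier argument buys is precisely that strong convergence $x^{n_k}\to x^0$ in $\mathrm{C}([0,T];\mathbb{X})$, which is reused in Remark \ref{rem3.2} when the nonlinear variant of the problem is discussed; your argument delivers only the optimal pair, which is all the theorem asks for.
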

\begin{proof}
	Let us first define $$\mathcal{J} := \inf \limits _{u \in \mathscr{U}_{\text{ad}}}\mathcal{J}(x,u).$$
	Since, $0\leq \mathcal{J} < +\infty$, there exists a minimizing sequence $\{u^n\} \in \mathscr{U}_{\text{ad}}$ such that $$\lim_{n\to\infty}\mathcal{J}(x^n,u^n) = \mathcal{J},$$ where $x^n(\cdot)$ is the unique mild solution of the system \eqref{2.3} with the control $u^n$ and  the initial data  
$
x^n(0)=	x_0 \in\mathbb{X}.
$
	Note that $x^n(\cdot)$ satisfies
	\begin{align}\label{3.8}
		x^n(t)= \mathrm{U}(t,0)x_{0}+\int^{t}_{0}\mathrm{U}(t,s)\mathrm{B}u^n(s)\mathrm{d}s.
	\end{align}
	Since  $0\in\mathscr{U}_{\text{ad}}$, without loss of generality, we may assume that $\mathcal{J}(x^n,u^n) \leq \mathcal{J}(x,0)$, where $(x,0)\in\mathscr{A}_{\text{ad}}$. Using the definition of $\mathcal{J}(\cdot,\cdot)$, this easily gives
	\begin{align}
	\left\|x^n(T)-x_{T}\right\|^{2}_{\mathbb{X}}+\lambda\int^{T}_{0}\left\|u^n(t)\right\|^{2}_{\mathbb{H}}\mathrm{d}t\leq \left\|x(T)-x_{T}\right\|^{2}_{\mathbb{X}}\leq 2\left(\|x(T)\|_{\mathbb{X}}^2+\|x_T\|_{\mathbb{X}}^2\right)<+\infty,
	\end{align}
	From the above relation, it is clear that, there exist a $R>0$, large enough such that
	$$0 \leq \mathcal{J}(x^n,u^n) \leq R < +\infty.$$
	In particular, there exists a large $\widetilde{C}>0,$ such that
\begin{align}\label{39}\int_0^T \|u^n(t)\|^2_{\mathbb{H}} \mathrm{d} t \leq  \widetilde{C} < +\infty .\end{align}Moreover, from \eqref{3.8}, we have 
	\begin{align}
	\|x^n(t)\|_{\mathbb{X}}&\leq\|\mathrm{U}(t,0)x^0\|_{\mathbb{X}}+\int_0^t\|\mathrm{U}(t,s)\mathrm{B}u^n(s)\|_{\mathbb{X}}\mathrm{d}s \nonumber\\&\leq\|\mathrm{U}(t,0)\|_{\mathcal{L}(\mathbb{X})}\|x^0\|_{\mathbb{X}}+\int_0^t\|\mathrm{U}(t,s)\|_{\mathcal{L}(\mathbb{X})}\|\mathrm{B}\|_{\mathcal{L}(\mathbb{H};\mathbb{X})}\|u^n(s)\|_{\mathbb{H}}\mathrm{d}s\nonumber\\&\leq C\|x^0\|_{\mathbb{X}}+CNt^{1/2}\left(\int_0^t\|u^n(s)\|_{\mathbb{H}}^2\mathrm{d}s\right)^{1/2}\nonumber\\&\leq C\|x^0\|_{\mathbb{X}}+CNt^{1/2}\widetilde{C}^{1/2}<+\infty,
	\end{align}
	for all $t\in[0,T]$. Since  $\mathrm{L}^{2}([0,T];\mathbb{X})$ is reflexive, an application of Banach-Alaoglu theorem yields the existence of a subsequence $\{x^{n_k}\}$ of $\{x^n\}$ such that 
	\begin{align}
	x^{n_k}\xrightharpoonup{w}x^0\ \text{ in }\mathrm{L}^{2}([0,T];\mathbb{X}).
	\end{align}
	From \eqref{39}, we also infer that the sequence $\{u^n\}$ is uniformly bounded in the space $\mathrm{L}^2([0,T];\mathbb{H})$. Since $\mathrm{L}^2([0,T];\mathbb{H})$ is a separable Hilbert space (in fact reflexive), using Banach-Alaoglu theorem, we can extract a subsequence $\{u^{n_k}\}$ of $\{u^n\}$ such that 
	\begin{align*}
	u^{n_k}\xrightharpoonup{w}u^0\ \text{ in }\mathrm{L}^2([0,T];\mathbb{H})=\mathscr{U}_{\text{ad}}. 
	\end{align*}
	Since $\mathrm{B}$ is a bounded linear operator from $\mathbb{H}$ to $\mathbb{X}$, the above convergence also implies
	\begin{align}\label{310}
	\mathrm{B}	u^{n_k}\xrightharpoonup{w}\mathrm{B}u^0\ \text{ in }\mathrm{L}^2([0,T];\mathbb{X}).
	\end{align}
		Note that 
	\begin{align}
&	\left\|\int_0^t\mathrm{U}(t,s)\mathrm{B}u^{n_k}(s)\mathrm{d}s-\int_0^t\mathrm{U}(t,s)\mathrm{B}u^0(s)\mathrm{d}s\right\|_{\mathbb{X}}\to 0, \ \text{ as } k\to\infty, 
	\end{align}
	for all $t\in[0,T]$,	using the weak convergence given in \eqref{310} and strongly continuous property of $\mathrm{U}(\cdot,\cdot)$ (see Lemma \ref{lem4.1} below and Lemma 3.2, Corollary 3.3, Chapter 3, \cite{LY}  also for one parameter family of compact semigroups). Taking weak limit in the equation \eqref{3.8},  we see that the pair $(x^0,u^0)$ satisfies the following system in the weak sense: 
	\begin{equation}\label{3.15}
	\left\{
	\begin{aligned}
	\dot{x}^0(t)&= \mathrm{A}(t)x^0(t)+\mathrm{B}u^0(t),\ t\in[0,T],\\
	x^0(0)&=x_{0},
	\end{aligned}
	\right.
	\end{equation} 
But the existence of a weak solution guarantees the existence of a mild solution (Theorem 1, \cite{JMB}), the system \eqref{3.15}  has a unique mild solution $x\in\mathrm{C}([0,T];\mathbb{X})$ such that 
		\begin{align}\label{3.16}
	x^0(t)= \mathrm{U}(t,0)x_{0}+\int^{t}_{0}\mathrm{U}(t,s)\mathrm{B}u^0(s)\mathrm{d}s.
	\end{align}
	Along a subsequence of \eqref{3.8}, one can easily get 
	\begin{align}\label{316}
	\|x^{n_k}(t)-x^0(t)\|_{\mathbb{X}}=	\left\|\int_0^t\mathrm{U}(t,s)\mathrm{B}u^{n_k}(s)\mathrm{d}s-\int_0^t\mathrm{U}(t,s)\mathrm{B}u^0(s)\mathrm{d}s\right\|_{\mathbb{X}}\to 0, \ \text{ as } k\to\infty, 
	\end{align}
	for all $t\in[0,T]$ and hence $x^{n_k}\to x^0$ in $\mathrm{C}([0,T];\mathbb{X})$, as $k\to\infty$. Since $x^0$ is the unique mild solution of \eqref{3.15}, the whole sequence $\{x_n\}$ converges to  $x^0$. Since $u^0\in\mathscr{U}_{\text{ad}}$ and $x^0$ is the unique mild solution of \eqref{3.15} corresponding to the control $u^0$, it is immediate that $(x^0,u^0)\in\mathscr{A}_{\text{ad}}$.
	
	Let us now show that  $(x^0,u^0)$ is a minimizer, that is, \emph{$\mathcal{J}=\mathcal{J}(x^0,u^0)$}.  Since the cost functional $\mathcal{J}(\cdot,\cdot)$ is continuous and convex (see Proposition III.1.6 and III.1.10,  \cite{EI}) on $\mathrm{L}^2([0,T];\mathbb{X}) \times \mathrm{L}^2([0,T];\mathbb{H})$, it follows that $\mathcal{J}(\cdot,\cdot)$ is weakly lower semi-continuous (Proposition II.4.5, \cite{EI}). That is, for a sequence 
	$$(x^n,u^n)\xrightharpoonup{w}(x^0,u^0)\ \text{ in }\ \mathrm{L}^2([0,T];\mathbb{X}) \times  \mathrm{L}^2([0,T];\mathbb{H}),$$
	we have 
	\begin{align*}
	\mathcal{J}(x^0,u^0) \leq  \liminf \limits _{n\rightarrow \infty} \mathcal{J}(x^n,u^n).
	\end{align*}
	Therefore, we obtain 
	\begin{align*}\mathcal{J} \leq \mathcal{J}(x^0,u^0) \leq  \liminf \limits _{n\rightarrow \infty} \mathcal{J}(x^n,u^n)=  \lim \limits _{n\rightarrow \infty} \mathcal{J}(x^n,u^n) = \mathcal{J},\end{align*}
	and hence $(x^0,u^0)$ is a minimizer of the problem \eqref{4}.
		\end{proof}

\begin{rem}
	Since the cost functional \eqref{3} is convex, the constraint system \eqref{2.3} is linear and $\mathscr{U}_{\text{ad}}=\mathrm{L}^2([0,T];\mathbb{H})$ is convex, optimal control obtained in Theorem \ref{optimal} is unique. 
\end{rem}

Note that an optimal control ${u}$, associated with the approximate controllability of an integer order linear differential equation, is just an optimal solution of the optimal control problem \eqref{4} (see \cite{M2}). Adapting this idea in the following lemma, we find a precise expression of an optimal  control ${u}$, which is given by the feedback law.
\begin{lem}\label{lem3.1}
Assume that ${u}$ is the optimal control satisfying \eqref{1} and minimizing the cost functional \eqref{2}. Then ${u}$ is given by
\begin{align*}
{u}(t)=\mathrm{B}^{*}\mathrm{U}(T,t)^{*}\mathrm{J}\left[\mathrm{R}(\lambda,\Lambda_{T})p(x(\cdot))\right],\ t\in[0,T],
\end{align*}
with
\begin{align*}
p(x(\cdot))=x_{T}-\mathrm{U}(T,0)x_{0}.
\end{align*}
\end{lem}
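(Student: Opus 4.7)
The plan is to derive the formula from the first-order optimality condition for the optimal control problem \eqref{4} and then identify the resulting quantity using equation \eqref{24}.

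First, I would eliminate the state from the cost functional. Using the mild solution \eqref{2.4} evaluated at $t=T$ together with the notation $L_Tu=\int_0^T\mathrm{U}(T,s)\mathrm{B}u(s)\mathrm{d}s$ from \eqref{2.2}, one has $x(T)-x_T=L_Tu-p(x(\cdot))$ with $p(x(\cdot))=x_T-\mathrm{U}(T,0)x_0$. Hence
$$\mathcal{J}(x,u)=\|L_Tu-p(x(\cdot))\|_{\mathbb{X}}^{2}+\lambda\int_{0}^{T}\|u(t)\|_{\mathbb{H}}^{2}\,\mathrm{d}t$$
depends only on $u\in\mathscr{U}_{\text{ad}}$. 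Because $\tfrac{1}{2}\|\cdot\|_{\mathbb{X}}^{2}$ is G\^ateaux differentiable with derivative equal to the duality map $\mathrm{J}$ (Section \ref{sec2}), the vanishing of the first variation of $\mathcal{J}$ at the optimal $u$ along any direction $v\in\mathscr{U}_{\text{ad}}$ reads
$$\langle\mathrm{J}[L_Tu-p(x(\cdot))],L_Tv\rangle+\lambda\int_{0}^{T}(u(t),v(t))\,\mathrm{d}t=0.$$
Using the adjoint identity $(L_T)^{*}=\mathrm{B}^{*}\mathrm{U}(T,\cdot)^{*}$, the arbitrariness of $v$, and the homogeneity $\mathrm{J}[-x]=-\mathrm{J}[x]$, I obtain the pointwise relation
$$\lambda u(t)=\mathrm{B}^{*}\mathrm{U}(T,t)^{*}\mathrm{J}[p(x(\cdot))-L_Tu],\qquad t\in[0,T].$$

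The key step is to recognize $z:=\lambda^{-1}(p(x(\cdot))-L_Tu)$ as $\mathrm{R}(\lambda,\Lambda_T)p(x(\cdot))$. Using $\mathrm{J}[\lambda x]=\lambda\mathrm{J}[x]$ again, the previous equation rewrites as $u(t)=\mathrm{B}^{*}\mathrm{U}(T,t)^{*}\mathrm{J}[z]$. Substituting this back into $L_Tu=\int_0^T\mathrm{U}(T,s)\mathrm{B}u(s)\mathrm{d}s$ and recognizing the kernel $\mathrm{U}(T,s)\mathrm{B}\mathrm{B}^{*}\mathrm{U}(T,s)^{*}$ from \eqref{2.2} gives $L_Tu=\Lambda_T\mathrm{J}[z]$, whence
$$\lambda z+\Lambda_T\mathrm{J}[z]=p(x(\cdot)).$$
This is precisely equation \eqref{24} with $h=\lambda^{-1}p(x(\cdot))$, and the unique solvability recalled just before \eqref{25} yields $z=\mathrm{R}(\lambda,\Lambda_T)p(x(\cdot))$. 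Plugging this into $u(t)=\mathrm{B}^{*}\mathrm{U}(T,t)^{*}\mathrm{J}[z]$ produces the stated feedback formula.

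The main obstacle I expect is purely algebraic bookkeeping with the duality map: $\mathrm{J}$ is nonlinear on a general Banach space, so additive decompositions such as $\mathrm{J}[L_Tu-p(x(\cdot))]=\mathrm{J}[L_Tu]-\mathrm{J}[p(x(\cdot))]$ are \emph{not} available, and the argument must be routed entirely through positive homogeneity together with the specific nonlinear equation \eqref{24}. With the properties of $\mathrm{J}$ collected in Section \ref{sec2} and the explicit form of $\Lambda_T$ in \eqref{2.2}, however, the passage from the Euler--Lagrange condition to the representation via $\mathrm{R}(\lambda,\Lambda_T)$ is essentially mechanical.
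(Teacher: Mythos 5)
Your proposal is correct and follows essentially the same route as the paper: both derive the Euler--Lagrange condition $\lambda u(t)=-\mathrm{B}^{*}\mathrm{U}(T,t)^{*}\mathrm{J}[x(T)-x_{T}]$ from the vanishing first variation, substitute back into the terminal-state formula to obtain $\lambda z+\Lambda_T\mathrm{J}[z]=p(x(\cdot))$ for $z=-\lambda^{-1}(x(T)-x_T)$, and conclude via the unique solvability of \eqref{24} and the homogeneity of $\mathrm{J}$. Your reformulation through $L_T$ and your explicit caution about the nonadditivity of $\mathrm{J}$ are only cosmetic refinements of the same argument.
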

\begin{proof}
Let $(x,u)$ be an optimal solution of \eqref{4} with the control $u$ and the corresponding trajectory be $x$. Then $\varepsilon=0$ is the critical point of 
\begin{align*}
\mathcal{I}(\varepsilon)=\mathcal{J}(x_{u+\varepsilon w},u+\varepsilon w),
\end{align*}
with $w\in \mathrm{L}^{2}([0,T];\mathbb{H})$, where $x_{u+\varepsilon w}$ is the unique mild solution of \eqref{2.3} with respect to the control $u+\varepsilon w$ and $x_{u+\varepsilon w}(\cdot)$ satisfies: 
\begin{align}\label{5}
x_{u+\varepsilon w}(t)= \mathrm{U}(t,0)x_{0}+\int^{t}_{0}\mathrm{U}(t,s)\mathrm{B}(u+\varepsilon w)(s)\mathrm{d}s.
\end{align}
Let us now compute the variation of the cost functional $\mathcal{J}$ (defined in \eqref{3}) as
\begin{align}\label{320}
\frac{\mathrm{d}}{\mathrm{d}\varepsilon}\mathcal{I}(\varepsilon)\Big|_{\varepsilon=0}&=\frac{\mathrm{d}}{\mathrm{d}\varepsilon}\bigg[\left\|x_{u+\varepsilon w}(T)-x_{T}\right\|^{2}_{\mathbb{X}}+\lambda\int^{T}_{0}\left\|u(t)+\varepsilon w(t)\right\|^{2}_{\mathbb{H}}\mathrm{d}t\bigg]_{\varepsilon=0}\nonumber\\
&=2\bigg[\langle  \mathrm{J}(x_{u+\varepsilon w}(T)-x_{T}), \frac{\mathrm{d}}{\mathrm{d}\varepsilon}(x_{u+\varepsilon w}(T)-x_{T})\rangle\nonumber\\&\qquad +2\lambda\int^{T}_{0}(u(t)+\varepsilon w(t),\frac{\mathrm{d}}{\mathrm{d}\varepsilon}(u(t)+\varepsilon w(t)))\mathrm{d}t\bigg]_{\varepsilon=0}\nonumber\\
&=2\left\langle\mathrm{J}(x(T)-x_T),\int_0^T\mathrm{U}(T,s)\mathrm{B}w(t)\mathrm{d}t \right\rangle+2\lambda\int_0^T(u(t),w(t))\mathrm{d} t. 
\end{align}
Since the first variation of the cost functional is zero, we obtain 
\begin{align}\label{2.7}
0&=\left\langle\mathrm{J}(x(T)-x_T),\int_0^T\mathrm{U}(T,t)\mathrm{B}w(t)\mathrm{d}t \right\rangle+\lambda\int_0^T(u(t),w(t))\mathrm{d} t\nonumber\\&=\int_0^T\left\langle\mathrm{J}(x(T)-x_T),\mathrm{U}(T,t)\mathrm{B}w(t) \right\rangle\mathrm{d}t+\lambda\int_0^T(u(t),w(t))\mathrm{d} t\nonumber\\&= \int_0^T\left(\mathrm{B}^*\mathrm{U}^*(T,t)\mathrm{J}(x(T)-x_T)+\lambda u(t),w(t) \right)\mathrm{d}t.
\end{align}
Since $w\in \mathrm{L}^{2}([0,T];\mathbb{H})$ is an arbitrary element (one can choose $w$ to be $\mathrm{B}^*\mathrm{U}^*(T,t)\mathrm{J}[x(T)-x_T]+\lambda u(t)$), it follows that the optimal control is given by
\begin{align}\label{2.8}
u(t)&= -\lambda^{-1}\mathrm{B}^{*}\mathrm{U}^{*}(T,t)\mathrm{J}[x(T)-x_{T}],\ t\in [0,T],
\end{align}
for a.e. $t\in [0,T]$. It also holds for all $t\in[0,T]$, since from the expressions \eqref{2.7} and \eqref{2.8}, it is clear that $u$ is continuous and belongs to $\mathrm{C}([0,T];\mathbb{X})$. Therefore the state system \eqref{2.3} at a final point $T$ with the above control $u$ is given by
\begin{align}\label{3.9}
x(T)&=\mathrm{U}(T,0)x_{0}-\int^{T}_{0}\lambda^{-1}\mathrm{U}(T,t)\mathrm{B}\mathrm{B}^{*}\mathrm{U}(T,t)^{*}\mathrm{J}\left[x(T)-x_{T}\right]\mathrm{d}s\\
&= \mathrm{U}(T,0)x_{0}-\lambda^{-1}\Lambda_{T} \mathrm{J}\left[x(T)-x_{T}\right].
\end{align}
Let us now define
\begin{align}\label{3.10}
p(x(\cdot)):=x_{T}-\mathrm{U}(T,0)x_{0}.
\end{align}
Combining \eqref{3.9} and \eqref{3.10}, we have the following:
\begin{align}\label{3.12}
x(T)-x_{T}&=-p(x(\cdot))-\lambda^{-1}\Lambda_{T} \mathrm{J}\left[x(T)-x_{T}\right].
\end{align}
From \eqref{3.12}, one can easily deduce that 
\begin{align}\label{3.13}
x(T)-x_T=-\lambda\mathrm{I}(\lambda\mathrm{I}+\Lambda_T\mathrm{J})^{-1}p(x(\cdot))=-\lambda\mathrm{R}(\lambda,\Lambda_T)p(x(\cdot)).
\end{align}
Finally, from \eqref{2.8}, we have 
\begin{align*}
u(t)=\mathrm{B}^{*}\mathrm{U}^{*}(T,t)\mathrm{J}\left[\mathrm{R}(\lambda,\Lambda_{T})p(x(\cdot))\right],\ t\in [0,T],
\end{align*}
which completes the proof. 
\end{proof}

Next, we state and prove the approximate controllability results for the linear non-autonomous system \eqref{2.4}. 

\begin{thm}\label{thm3.2}
	The linear non-autonomous control system \eqref{2.4} is approximately controllable on $[0,T]$ if and only if $\mathrm{ker}\{(L_T)^*\}=0$, where $(L_T)^*$ is defined in \eqref{2.2}.
	\end{thm}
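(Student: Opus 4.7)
My plan is to split the if-and-only-if into its two implications and use the bridge already established in the paper between injectivity of $(L_T)^*$, positivity of $\Lambda_T$, and strong convergence of $z_\lambda$ to zero.

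\textbf{Sufficiency ($\ker\{(L_T)^*\}=0 \Rightarrow$ approximate controllability).} First I would invoke the equivalence already cited from Theorem 2.3 of \cite{M}: $\ker\{(L_T)^*\}=\{0\}$ is the same as saying $\langle x',\Lambda_T x'\rangle=\|(L_T)^*x'\|_{\mathbb{H}}^2>0$ for every nonzero $x'\in\mathbb{X}'$, and this positivity of $\Lambda_T$ is in turn equivalent to $z_\lambda(h)\to 0$ strongly as $\lambda\downarrow 0$ for every $h\in\mathbb{X}$ (i.e.\ Assumption (A3)). Given an arbitrary target $x_T\in\mathbb{X}$, set $p:=x_T-\mathrm{U}(T,0)x_0$ as in \eqref{3.10} and apply the feedback control from Lemma \ref{lem3.1}, namely $u_\lambda(t)=\mathrm{B}^*\mathrm{U}(T,t)^*\mathrm{J}[\mathrm{R}(\lambda,\Lambda_T)p]$. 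From \eqref{3.13} the resulting mild solution satisfies
\begin{align*}
x_\lambda(T)-x_T=-\lambda\mathrm{R}(\lambda,\Lambda_T)p=-z_\lambda(p),
\end{align*}
so $\|x_\lambda(T)-x_T\|_{\mathbb{X}}=\|z_\lambda(p)\|_{\mathbb{X}}\to 0$ as $\lambda\downarrow 0$. Since $x_T$ was arbitrary, $\overline{\mathscr{R}_T(x_0)}=\mathbb{X}$.

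\textbf{Necessity (approximate controllability $\Rightarrow \ker\{(L_T)^*\}=0$).} Here I would argue by contraposition via the standard annihilator duality in Banach spaces. Notice that $\mathscr{R}_T(x_0)=\mathrm{U}(T,0)x_0+\mathrm{Range}(L_T)$, so $\overline{\mathscr{R}_T(x_0)}=\mathbb{X}$ is equivalent to $\overline{\mathrm{Range}(L_T)}=\mathbb{X}$. Suppose $x'\in\mathbb{X}'$ satisfies $(L_T)^*x'=0$. Then for every $u\in\mathrm{L}^2([0,T];\mathbb{H})$,
\begin{align*}
\langle L_T u,x'\rangle=\langle u,(L_T)^*x'\rangle_{\mathbb{H}}=0,
\end{align*}
so $x'$ annihilates a dense subspace of $\mathbb{X}$, forcing $x'=0$. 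This is exactly $\ker\{(L_T)^*\}=\{0\}$.

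\textbf{Where I expect friction.} The sufficiency direction is essentially a plug-in of earlier results, so the only subtlety is making sure that the state produced by $u_\lambda$ really is the mild solution solving \eqref{3.15} with $x_\lambda(T)-x_T=-z_\lambda(p)$; this is already packaged in Lemma \ref{lem3.1} and the identity \eqref{3.13}, but I would spell out that $p$ depends only on $x_0,x_T$ and not on the trajectory, so the implicit fixed-point issue present in the nonlinear case does not arise. The necessity direction is cleanest through the Hahn-Banach-type duality between density of range and injectivity of adjoint; I would cite this general fact (or give the two-line Hahn-Banach proof) rather than recycle the feedback-control machinery, which is unnecessary and would make the argument circular when Assumption (A3) is not known a priori.
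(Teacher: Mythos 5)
Your proposal is correct, and its sufficiency half coincides with the paper's argument: both use the feedback control $u(t)=\mathrm{B}^{*}\mathrm{U}(T,t)^{*}\mathrm{J}[\mathrm{R}(\lambda,\Lambda_{T})p]$ with $p=x_{T}-\mathrm{U}(T,0)x_{0}$, the resolvent identity $x_{\lambda}(T)-x_{T}=-\lambda\mathrm{R}(\lambda,\Lambda_{T})p=-z_{\lambda}(p)$, and the equivalence from Theorem 2.3 of \cite{M} between $\mathrm{ker}\{(L_T)^*\}=\{0\}$, positivity of $\Lambda_{T}$, and strong convergence $z_{\lambda}(h)\to 0$. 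Where you genuinely diverge is the necessity direction. The paper's proof effectively stops after concluding sufficiency; the reverse implication is left implicit in the ``if and only if'' attached to the displayed convergence, which strictly speaking shows only that this particular family of feedback controls steers approximately to $x_{T}$ precisely when $\Lambda_{T}$ is positive --- it does not by itself exclude the possibility that other controls achieve approximate controllability when $\mathrm{ker}\{(L_T)^*\}\neq\{0\}$. Your annihilator argument closes that gap cleanly and more elementarily: since $\mathscr{R}_{T}(x_0)=\mathrm{U}(T,0)x_{0}+\mathrm{Range}(L_T)$, density of the reachable set is density of $\mathrm{Range}(L_T)$, and any $x'$ with $(L_T)^*x'=0$ annihilates that dense range and hence vanishes; this is pure Hahn--Banach duality with no resolvent machinery, and it is what the paper silently appeals to in Remark \ref{rem3.2} when it restates the criterion as ``$\mathrm{B}^*\mathrm{U}^*(T,t)x'=0$ on $[0,T]$ implies $x'=0$.'' Two cosmetic points only: you announce contraposition but then give the direct argument (harmless), and the pairing $\langle L_Tu,x'\rangle$ equals $((L_T)^*x',u)_{\mathrm{L}^2([0,T];\mathbb{H})}$ rather than an $\mathbb{H}$-inner product. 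Neither affects correctness.
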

	\begin{proof}
Since the system \eqref{2.3} is linear, $x_0\in\mathbb{X}$ and $\mathrm{B}u\in\mathrm{L}^2([0,T];\mathbb{X})$, we know that for every $\lambda>0$ and $x_{T}\in \mathbb{X}$, there exists a unique mild solution $x_{\lambda}\in \mathrm{C}([0,T];\mathbb{X})$ such that
\begin{align}\label{328}
x_{\lambda}(t)=\mathrm{U}(t,0)x_{0}+\int^{t}_{0}\mathrm{U}(t,s)\mathrm{B}u(s)\mathrm{d}s,\ t\in[0,T],
\end{align}
with
\begin{align*}
u(t)=\mathrm{B}^{*}\mathrm{U}(T,t)^{*}\mathrm{J}[\mathrm{R}(\lambda,\Lambda_{T})p(x_{\lambda}(\cdot))], \ \text{ and }\ p(x_{\lambda}(\cdot))=x_{T}-\mathrm{U}(T,0)x_0. 
\end{align*}
Using \eqref{328}, it can be easily seen that 
\begin{align}\label{3.29}
x_{\lambda}(T)&=\mathrm{U}(T,0)x_{0}+\int^{T}_{0}\mathrm{U}(T,s)\mathrm{B}u(s)\mathrm{d}s\nonumber\\&=\mathrm{U}(T,0)x_{0}+\Lambda_{T}\mathrm{J}[\mathrm{R}(\lambda,\Lambda_{T})p(x_{\lambda}(\cdot))]\nonumber\\
&= x_{T}-p(x_{\lambda}(\cdot))+\Lambda_{T}\mathrm{J}[\mathrm{R}(\lambda,\Lambda_{T})p(x_{\lambda}(\cdot))]\nonumber\\
&= x_{T}-(\lambda \mathrm{I}+\Lambda_{T}\mathrm{J})\mathrm{R}(\lambda,\Lambda_{T})p(x_{\lambda}(\cdot))+\Lambda_{T}\mathrm{J}[\mathrm{R}(\lambda,\Lambda_{T})p(x_{\lambda}(\cdot))]\nonumber\\
&= x_{T}-\lambda \mathrm{R}(\lambda,\Lambda_{T})p(x_{\lambda}(\cdot)),
\end{align}
and since $\|\mathrm{U}(T,0)x_0\|_{\mathbb{X}}\leq C\|x_0\|_{\mathbb{X}}$ and $x_T\in\mathbb{X}$, we have 
\begin{align*}
\left\|x_{\lambda}(T)-x_{T}\right\|_{\mathbb{X}}&\leq\left\|\lambda \mathrm{R}(\lambda,\Lambda_{T})(x_{T}-\mathrm{U}(T,0)x_0)\right\|_{\mathbb{X}}
\rightarrow 0,\ \text{ as}\ \lambda\downarrow 0, 
\end{align*}
if and only if $\langle x',\Lambda_T x'\rangle =\|(L_T)^*x'\|_{\mathbb{H}}^2>0,$ for all non-zero $x'\in\mathbb{X}'$ (Theorem 2.3, \cite{M}). This implies that the  linear  non-autonomous control system \eqref{2.4} is approximately controllable on $[0,T]$.
\end{proof}

\begin{rem}\label{rem3.2}
	1. Note that for $x'\in\mathbb{X}'$ and $u\in\mathrm{L}^2([0,T];\mathbb{H})$, we have 
	\begin{align}
	((L_T)^*x',u)_{\mathrm{L}^2([0,T];\mathbb{H})}&=\langle x',L_Tu\rangle =\left\langle x',\int_0^T\mathrm{U}(T,t)\mathrm{B}u(t)\mathrm{d}t\right\rangle \nonumber\\&=\int_0^T\left\langle x',\mathrm{U}(T,t)\mathrm{B}u(t)\right\rangle\mathrm{d}t\nonumber\\&=\int_0^T(\mathrm{B}^*\mathrm{U}^*(T,t)x',u(t))\mathrm{d}t\nonumber\\&=(\mathrm{B}^*\mathrm{U}^*(T,t)x',u)_{\mathrm{L}^2([0,T];\mathbb{H})}
	\end{align}
	and hence $(L_T)^*=\mathrm{B}^*\mathrm{U}^*(T,t)$. Thus, from Theorem \ref{thm3.2}, it is clear that the system \eqref{2.4} is approximately controllable on $[0,T]$ if and only if $\mathrm{B}^*\mathrm{U}^*(T,t)x'=0$ on  $[0,T]$ implies $x'=0$. 
	
2. For the nonlinear problem, if $u$ appearing in \eqref{1} minimizes the cost functional \eqref{4}, then also one can prove the existence of an optimal control for the problem \eqref{4} in a similar fashion as in Theorem \ref{optimal}. But we need the following assumption of Lipschitz continuity on $f(\cdot,\cdot)$.
\begin{itemize}
\item [(AL)] there exists a positive constant $L$ such that
\begin{align*}
\left\|f(t,x)-f(t,y)\right\|_{\mathbb{X}}\leq L\|x-y\|_{\mathbb{X}},\ \text{ for all } \ (t,x), (t,y)\in [0,T]\times \mathbb{X}.
\end{align*}  
\end{itemize}
 In the proof, one needs to replace \eqref{3.8} with 
\begin{align}\label{4.1}
x^n(t)= \mathrm{U}(t,0)x_{0}+\int_0^t\mathrm{U}(t,s)f(s,x^n(s))\mathrm{d}s+\int^{t}_{0}\mathrm{U}(t,s)\mathrm{B}u^n(s)\mathrm{d}s, 
\end{align}
and \eqref{3.16} with 
\begin{align}\label{4.2}
x^0(t)= \mathrm{U}(t,0)x_{0}+\int_0^t\mathrm{U}(t,s)f(s,x^0(s))\mathrm{d}s+\int^{t}_{0}\mathrm{U}(t,s)\mathrm{B}u^0(s)\mathrm{d}s.
\end{align}
Now, we consider
\begin{align}\label{4.4}
\|x^n(t)-x^0(t)\|_{\mathbb{X}}&\leq\int_0^t\|\mathrm{U}(t,s)\|_{\mathcal{L}(\mathbb{X})}\|f(s,x^n(s))-f(s,x^0(s))\|_{\mathbb{X}}\mathrm{d}s \nonumber\\&\quad+ \left\|\int_0^t\mathrm{U}(t,s)(\mathrm{B}u^{n_k}(s)-\mathrm{B}u^{0}(s))\mathrm{d}s\right\|_{\mathbb{X}}\nonumber\\&\leq CL\int_0^t\|x^n(s)-x^0(s)\|_{\mathbb{X}}\mathrm{d}s+ \left\|\int_0^t\mathrm{U}(t,s)(\mathrm{B}u^{n_k}(s)-\mathrm{B}u^{0}(s))\mathrm{d}s\right\|_{\mathbb{X}}.
\end{align}
An application of Gronwall's inequality in \eqref{4.4} gives 
\begin{align}
\|x^n(t)-x^0(t)\|_{\mathbb{X}}\leq e^{CLt} \left\|\int_0^t\mathrm{U}(t,s)(\mathrm{B}u^{n_k}(s)-\mathrm{B}u^{0}(s))\mathrm{d}s\right\|_{\mathbb{X}}
\to 0, \ \text{ as }\ k\to\infty, 
\end{align}
for all $t\in[0,T]$, using \eqref{316}. 

3. It seems to the authors that obtaining a feedback control which is used to prove the approximate controllability results (see \eqref{41} below) through  optimal control problem technique may not work for nonlinear systems. The difficulty arises in \eqref{320}, when one tries to differentiate the cost functional with respect to $\varepsilon$, as the trajectory $x_{u+\varepsilon w}(\cdot)$ given by
\begin{align}
x_{u+\varepsilon w}(t)=x_0+\int_0^tf(s,x_{u+\varepsilon w}(s))\mathrm{d}s+\int_0^t\mathrm{B}(u+\varepsilon w)(s)\mathrm{d}s,
\end{align}
depends on $\varepsilon$ in a nonlinear fashion. 
\end{rem}

\section{Approximate controllability of the nonlinear system}\label{sec4}\setcounter{equation}{0}
In this section, we show the existence and approximate controllability of the system \eqref{1}.
Motivated from the case of linear system, for every $\lambda>0$ and $x_{T}\in \mathbb{X}$, we consider the following integral system:
\begin{equation}\label{41}
\left\{
\begin{aligned}
x(t) &=\mathrm{U}(t,0)x_{0}+\int^{t}_{0}\mathrm{U}(t,s)\left[f(s,x(s))\right]+\mathrm{B}u(s)\mathrm{d}s, \ 0\leq t\leq T,\\
u(t)&=\mathrm{B}^{*}\mathrm{U}^{*}(T,t)\mathrm{J}\left[\mathrm{R}(\lambda,\Lambda_{T})p(x(\cdot))\right],\\
p(x(\cdot))&=x_{T}-\mathrm{U}(T,0)-\int^{T}_{0}\mathrm{U}(T,s)f(s,x(s))\mathrm{d}s.
\end{aligned} 
\right.
\end{equation}
We show that  the system \eqref{1} is approximately controllable if for all $\lambda>0$, there exists a continuous function  $(x,u)\in\mathrm{C}([0,T];\mathbb{X})\times\mathrm{C}([0,T];\mathbb{H})$. More precisely, we would like to approximate any fixed point $x_{T}\in \mathbb{X}$ under appropriate conditions by using the final state of the solution $x$ with the control $u$ given in system \eqref{41}. With this fact in mind,  our aim in this section is to find conditions for solvability of the system \eqref{41}. In order to do this,  we first show that for every $\lambda> 0$ and $x_{T}\in \mathbb{X}$, the system \eqref{1} has at least one mild solution. Then, for any given any $x_{T}\in \mathbb{X},$  we can approximate it with these solutions $\left\{x_{\lambda}:\lambda>0\right\}$.
Proof of the following lemma is similar to Lemma 3.1, \cite{MI}, and for completeness, we give a proof here. 
\begin{lem}\label{lem4.1}
Let the Assumptions  (P1)-(P4) hold. Let $\mathrm{G}:\mathrm{C}([0,T];\mathbb{X})\rightarrow \mathrm{C}([0,T];\mathbb{X})$ be the \emph{Cauchy operator} defined by 
\begin{align}
(\mathrm{G}\varphi)(t)= \int^{t}_{0}\mathrm{U}(t,s)\varphi(s)\mathrm{d}s, \ t\in [0,T].
\end{align}
Then $\mathrm{G}$ is a compact operator.
\begin{proof}
We prove that $\mathrm{G}$ is a compact operator by making use of the Arzel\'a-Ascoli theorem. Let a closed and bounded ball $\mathcal{B}_R$ in $\mathrm{C}([0,T];\mathbb{X})$ be defined as 
\begin{align}\label{43}
\mathcal{B}_R=\left\{\varphi\in \mathrm{C}([0,T];\mathbb{X}):\left\|\varphi\right\|_{\mathbb{X}}\leq R\right\}.
\end{align}
In order to use the Arzel\'a-Ascoli theorem, we first show that the set $\mathrm{G}\mathcal{B}_R$ is an equicontinuous set on $\mathrm{C}([0,T];\mathbb{X})$. For $\varphi\in \mathcal{B}_R$ and $0\leq t_{1}\leq t_{2}\leq T$, we consider the following:
\begin{align*}
J_1&=\left\|(\mathrm{G}\varphi)(t_{2})-(\mathrm{G}\varphi)(t_{1})\right\|_{\mathbb{X}}\\
&=\left\|\int^{t_{2}}_{0}\mathrm{U}(t_{2},s)\varphi(s)\mathrm{d}s-\int^{t_{1}}_{0}\mathrm{U}(t_{1},s)\varphi(s)\mathrm{d}s\right\|_{\mathbb{X}}\\ &\leq \left\|\int^{t_{1}}_{0}\left(\mathrm{U}(t_{2},s)-\mathrm{U}(t_{1},s)\right)\varphi(s)\mathrm{d}s\right\|_{\mathbb{X}}+\left\|\int_{t_{1}}^{t_2}\mathrm{U}(t_{2},s)\varphi(s)\mathrm{d}s\right\|_{\mathbb{X}}  \\
&\leq \int^{t_{1}}_{0}\left\|(\mathrm{U}(t_{2},s)-\mathrm{U}(t_{1},s))\varphi(s)\right\|_{\mathbb{X}}\mathrm{d}s+\int^{t_{2}}_{t_{1}}\left\|\mathrm{U}(t_{2},s)\right\|_{\mathcal{L}(\mathbb{X})}\|\varphi(s)\|_{\mathbb{X}}\mathrm{d}s\\
&\leq \int^{t_{1}}_{0}\left\|(\mathrm{U}(t_{2},s)-\mathrm{U}(t_{1},s))\varphi(s)\right\|_{\mathbb{X}}\mathrm{d}s+CR(t_{2}-t_{1}).
\end{align*}
For $t_{1}=0$, we have 
$
\lim\limits_{t_{2}\rightarrow 0}J_1=0, \ \text{uniformly for} \ \varphi\in \mathcal{B}_R.
$
If $0< t_{1}< T$, for $0< \delta < t_{1}$, we rewrite $J_1$ as 
\begin{align*}
J_1&\leq \int^{t_{1}-\delta}_{0}\left\|(\mathrm{U}(t_{2},s)-\mathrm{U}(t_{1},s))\varphi(s)\right\|_{\mathbb{X}}\mathrm{d}s+R\int^{t_{1}}_{t_{1}-\delta}\left\|\mathrm{U}(t_{2},s)-\mathrm{U}(t_{1},s)\right\|_{\mathbb{X}}\mathrm{d}s+CR(t_{2}-t_{1})\nonumber\\&=:J_2+J_3+J_4,
\end{align*}
where $J_i$, $i=2,3,4,$ are the terms appearing in the right hand side of the above inequality. Clearly, as $t_{2}\rightarrow t_{1}$ $J_4\to 0$. For sufficiently small   $\delta$, note that the compactness of $\mathrm{U}(t,s)$ for $t-s>0$, implies the continuity in the uniform operator topology and hence $J_3\to 0$ as $t_1\to t_2$.  Since $\mathrm{U}(t,s)$ is strongly continuous, $J_2\to 0$ as $t_1\to t_2$, for all $\varphi\in\mathcal{B}_R\subset\mathbb{X}$. Thus, the right hand side of the above inequality tends to zero, independent of $\varphi\in \mathcal{B}_R$. Since $\delta>0$ is arbitrary, we conclude that
\begin{align*}
\lim_{\left|t_{1}-t_{2}\right|\rightarrow 0}J_1= 0, \ \text{uniformly for} \ \varphi\in \mathcal{B}_R,
\end{align*}
which implies $\mathrm{G}\mathcal{B}_R$ is equicontinuous on $\mathrm{C}([0,T];\mathbb{X})$.

Let us now show that $\left\{(\mathrm{G}\varphi)(t):\varphi\in \mathcal{B}_R\right\}$ is precompact in $\mathbb{X}$, for every $t\in [0,T]$. Let $\mathcal{B}_R$ be the bounded subset of $\mathrm{C}([0,T];\mathbb{X})$, $0<t\leq T$ as defined in \eqref{43}. For $0<t\leq T$ and $0<\varepsilon<t$, we consider the following:
\begin{align*}
&\left\|\mathrm{U}(t,t-\varepsilon)\int^{t-\varepsilon}_{0}\mathrm{U}(t-\varepsilon,s)\varphi(s)\mathrm{d}s-\int^{t}_{0}\mathrm{U}(t,s)\varphi(s)\mathrm{d}s\right\|_{\mathbb{X}}\nonumber\\ &\leq \int_0^{t-\varepsilon}\|(\mathrm{U}(t,t-\varepsilon)\mathrm{U}(t-\varepsilon,s)-\mathrm{U}(t,s))\varphi(s)\|_{\mathbb{X}}\mathrm{d}s+\int_{t-\varepsilon}^t\|\mathrm{U}(t,s)\varphi(s)\|_{\mathbb{X}}\mathrm{d}s \nonumber\\
&\leq R\int^{t-\varepsilon}_{0}\left\|\mathrm{U}(t,t-\varepsilon)\mathrm{U}(t-\varepsilon,s)-\mathrm{U}(t,s)\right\|_{\mathcal{L}(\mathbb{X})}\mathrm{d}s+R\int^{t}_{t-\varepsilon}\left\|\mathrm{U}(t,s)\right\|_{\mathcal{L}(\mathbb{X})}\mathrm{d}s.
\end{align*}
Using the semigroup property of the evolution system $\left\{\mathrm{U}(t,s)\right\}$, where $t-s>0$, the first term on the right hand side of the above inequality is zero and we conclude that
\begin{align*}
\left\|\mathrm{U}(t,t-\varepsilon)\int^{t-\varepsilon}_{0}\mathrm{U}(t-\varepsilon,s)\varphi(s)\mathrm{d}s-\int^{t}_{0}\mathrm{U}(t,s)\varphi(s)\mathrm{d}s\right\|_{\mathbb{X}}\leq CR{\varepsilon}.
\end{align*}
The above expression shows that $\left\{(\mathrm{G}\varphi)(t):\varphi\in \mathcal{B}_R\right\}$ is precompact in $\mathbb{X}$.  Thus, by using the total boundedness and equicontinuity, we conclude that the operator $\mathrm{G}$ is compact in view of Arzel\'a-Ascoli theorem.
\end{proof}
\end{lem}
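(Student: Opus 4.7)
The plan is to verify the three hypotheses of the Arzel\'a--Ascoli theorem for the image of an arbitrary closed bounded ball $\mathcal{B}_R=\{\varphi\in\mathrm{C}([0,T];\mathbb{X}):\|\varphi\|_{\mathrm{C}([0,T];\mathbb{X})}\leq R\}$ under $\mathrm{G}$: uniform boundedness in $\mathrm{C}([0,T];\mathbb{X})$, equicontinuity in $t$, and precompactness in $\mathbb{X}$ at each fixed $t\in[0,T]$. Uniform boundedness is immediate from $\|\mathrm{U}(t,s)\|_{\mathcal{L}(\mathbb{X})}\leq C$ (Lemma \ref{lem2.1}(1)), giving $\|(\mathrm{G}\varphi)(t)\|_{\mathbb{X}}\leq CRT$.

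For equicontinuity, given $0\leq t_1<t_2\leq T$ I would split
\[
(\mathrm{G}\varphi)(t_2)-(\mathrm{G}\varphi)(t_1)=\int_0^{t_1}\bigl(\mathrm{U}(t_2,s)-\mathrm{U}(t_1,s)\bigr)\varphi(s)\,\mathrm{d}s+\int_{t_1}^{t_2}\mathrm{U}(t_2,s)\varphi(s)\,\mathrm{d}s.
\]
The second piece is bounded by $CR(t_2-t_1)$. For the first piece, the issue is that strong continuity of $\mathrm{U}(\cdot,\cdot)$ does not by itself yield estimates that are uniform in $\varphi\in\mathcal{B}_R$. I would cut $[0,t_1]$ into $[0,t_1-\delta]$ and $[t_1-\delta,t_1]$ for a small $\delta>0$. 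The near-diagonal part is controlled by $2CR\delta$. On $[0,t_1-\delta]$ the evolution system is bounded away from the diagonal, so by Lemma \ref{lem2.2} the operator $\mathrm{U}(t,s)$ is compact, and compactness upgrades strong continuity to continuity in the operator norm on this compact set; hence $\|\mathrm{U}(t_2,s)-\mathrm{U}(t_1,s)\|_{\mathcal{L}(\mathbb{X})}\to 0$ uniformly in $s\in[0,t_1-\delta]$ as $t_2\to t_1$, producing an estimate independent of $\varphi$. Taking $\delta\to 0$ at the end gives equicontinuity.

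For pointwise precompactness at $t>0$ I would use an $\varepsilon$-approximation exploiting the evolution law $\mathrm{U}(t,t-\varepsilon)\mathrm{U}(t-\varepsilon,s)=\mathrm{U}(t,s)$. Set
\[
(\mathrm{G}_\varepsilon\varphi)(t):=\mathrm{U}(t,t-\varepsilon)\int_0^{t-\varepsilon}\mathrm{U}(t-\varepsilon,s)\varphi(s)\,\mathrm{d}s.
\]
Then $(\mathrm{G}\varphi)(t)-(\mathrm{G}_\varepsilon\varphi)(t)=\int_{t-\varepsilon}^t\mathrm{U}(t,s)\varphi(s)\,\mathrm{d}s$ is of norm at most $CR\varepsilon$, uniformly in $\varphi\in\mathcal{B}_R$. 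Meanwhile, for fixed $\varepsilon$ the inner integrals form a bounded subset of $\mathbb{X}$, and $\mathrm{U}(t,t-\varepsilon)$ is compact by Lemma \ref{lem2.2}, so $\{(\mathrm{G}_\varepsilon\varphi)(t):\varphi\in\mathcal{B}_R\}$ is precompact. Since the family $\{(\mathrm{G}\varphi)(t):\varphi\in\mathcal{B}_R\}$ is approximated within $CR\varepsilon$ for every $\varepsilon>0$ by precompact sets, it is totally bounded in $\mathbb{X}$, hence precompact.

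I expect the main obstacle to be the first piece of the equicontinuity estimate: strong continuity alone is insufficient for uniformity over $\mathcal{B}_R$, so the $\delta$-cut together with the upgrade from strong to uniform continuity (supplied by compactness of $\mathrm{U}(t,s)$ off the diagonal) is the essential maneuver. Once equicontinuity and pointwise precompactness are both in hand, Arzel\'a--Ascoli concludes that $\mathrm{G}(\mathcal{B}_R)$ is relatively compact in $\mathrm{C}([0,T];\mathbb{X})$, proving compactness of $\mathrm{G}$.
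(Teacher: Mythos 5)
Your proposal is correct and follows essentially the same route as the paper: Arzel\'a--Ascoli applied to $\mathrm{G}\mathcal{B}_R$, with equicontinuity obtained by splitting off the interval $[t_1,t_2]$ and cutting $[0,t_1]$ at $t_1-\delta$, and pointwise precompactness obtained from the approximants $\mathrm{U}(t,t-\varepsilon)\int_0^{t-\varepsilon}\mathrm{U}(t-\varepsilon,s)\varphi(s)\,\mathrm{d}s$ together with compactness of $\mathrm{U}(t,t-\varepsilon)$ (Lemma \ref{lem2.2}). The one place where you genuinely diverge is the allocation of arguments in the equicontinuity step, and your allocation is the sound one: on $[0,t_1-\delta]$ you have $t_1-s\geq\delta>0$, so compactness upgrades strong continuity to continuity in the uniform operator topology and the estimate is independent of $\varphi$, while the strip $[t_1-\delta,t_1]$ is simply bounded by $2CR\delta$. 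The paper instead invokes operator-norm continuity on the near-diagonal strip $[t_1-\delta,t_1]$ (where $\mathrm{U}(t_1,s)\to\mathrm{I}$ as $s\uparrow t_1$, so compactness is unavailable at the endpoint) and appeals only to strong continuity on $[0,t_1-\delta]$, which by itself does not yield an estimate uniform over $\varphi\in\mathcal{B}_R$. You correctly identified this as the crux, and your version repairs that weak point; everything else, including the total-boundedness argument at fixed $t$, matches the paper.
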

\begin{thm}\label{thm4.1}
Let the Assumptions (P1)-(P4) and (A1)-(A2) hold true. Then the system \eqref{1} has at least one mild solution on $[0,T],$ for every $\lambda>0$ and for fixed $x_{T}\in \mathbb{X}$.
\begin{proof}
For fixed $\lambda>0$ and given $x_{T}\in \mathbb{X}$, we define the solution operator $\Psi:\mathrm{C}([0,T];\mathbb{X})\rightarrow \mathrm{C}([0,T];\mathbb{X})$ as follows:
\begin{equation}\label{44}
(\Psi x)(t)=\mathrm{U}(t,0)x_{0}+\int^{t}_{0}\mathrm{U}(t,s)\left[f(s,x(s))+\mathrm{B}u(s)\right]\mathrm{d}s,\ t\in [0,T],
\end{equation}
with
\begin{align*}
u(t)=\mathrm{B}^{*}\mathrm{U}(T,t)^{*}\mathrm{J}[\mathrm{R}(\lambda,\Lambda_{T})p(x(\cdot)) ]
\end{align*}
and
\begin{align*}
p(x(\cdot))=x_{T}-\mathrm{U}(T,0)x_{0}-\int^{T}_{0}\mathrm{U}(T,s)f(s,x(s))\mathrm{d}s.
\end{align*}
From the definition of $\Psi$, it is clear that the fixed point of $\Psi$ is a mild solution of the  system \eqref{1}. We obtain the fixed point of $\Psi$ by using \emph{Schauder's fixed point theorem}. 

\vskip 0.1in 
\noindent\textbf{Step (1): }\emph{$\Psi$ is a continuous operator.}
Let us first show that the mapping $\Psi$ is a continuous operator on $\mathrm{C}([0,T];\mathbb{X})$. Let $\left\{x_{n}\right\}_{n\in \mathbb{N}}$ be a sequence in $\mathrm{C}([0,T];\mathbb{X})$ with $\lim\limits_{n\rightarrow \infty}x_{n}=x$ in $\mathrm{C}([0,T];\mathbb{X})$, that is, $$\lim\limits_{n\to\infty}\sup\limits_{t\in[0,T]}\|x^n(t)-x(t)\|_{\mathbb{X}}=0.$$ Remember that the function $f$ is continuous, and using the strong convergence of $x_n\to x$ uniformly, we have $f(s,x_{n}(s))$ converges to $f(s,x(s))$ uniformly for $s\in [0,T]$. Thus, for $s\in[0,T]$, we have
\begin{align}\label{45}
\left\|p(x_{n}(\cdot))-p(x(\cdot))\right\|_{\mathbb{X}}&=\left\|\int^{T}_{0}\mathrm{U}(T,s)\left[f(s,x_n(s))-f(s,x(s))\right]\mathrm{d}s\right\|_{\mathbb{X}}\nonumber\\&\leq\int_0^T\|\mathrm{U}(T,s)\|_{\mathcal{L}(\mathbb{X})}\|f(s,x_n(s))-f(s,x(s))\|_{\mathbb{X}}\mathrm{d}s\nonumber\\&\leq CT \sup_{s\in [0,T]}\left\|f(s,x_{n}(s))-f(s,x(s))\right\|_{\mathbb{X}}\to 0, \ \text{ as }\ n\to\infty.
\end{align}
Using \eqref{25} and \eqref{45}, for $\lambda>0$, we know that 
\begin{align}\label{46}
\|\mathrm{R}(\lambda,\Lambda_{T})p(x_n(\cdot))-\mathrm{R}(\lambda,\Lambda_{T})p(x(\cdot))\|_{\mathbb{X}}&= \frac{1}{\lambda}\|\lambda\mathrm{R}(\lambda,\Lambda_{T})\left(p(x_n(\cdot))-p(x(\cdot))\right)\|_{\mathbb{X}}\nonumber\\&\leq\frac{1}{\lambda}\|p(x_n(\cdot))-p(x(\cdot))\|_{\mathbb{X}}\to 0\ \text{ as }\ \to\infty,
\end{align}
and hence $\mathrm{R}(\lambda,\Lambda_{T})p(x_n(\cdot))\to \mathrm{R}(\lambda,\Lambda_{T})p(x(\cdot))$ in $\mathbb{X}$ as $n\to\infty$. Since the mapping $\mathrm{J}:\mathbb{X}\to\mathbb{X}'$  is  demicontinuous, it is immediate that 
\begin{align}\label{48}
\mathrm{J}\left[\mathrm{R}(\lambda,\Lambda_{T})p(x^n(\cdot))\right]\xrightharpoonup{w}\mathrm{J}\left[\mathrm{R}(\lambda,\Lambda_{T})p(x(\cdot))\right] \ \text{ as } \ n\to\infty  \ \text{ in }\ \mathbb{X}'.
\end{align}
 Remember that product of a compact operator and a bounded linear operator is again a compact operator. Since $\mathrm{U}(t,s)$ is compact for $t>s$ and $\mathrm{B}$ is a bounded linear operator from $\mathbb{H}$ to $\mathbb{X}$, we obtain that $\mathrm{U}(t,s)\mathrm{B}$ is a compact operator for $t>s$ in $\mathbb{X}$. Also, an operator $\mathrm{K}$ is compact if and only if its adjoint $\mathrm{K}^*$ is compact. Thus, $(\mathrm{U}(t,s)\mathrm{B})^*$, $t>s$, is a compact operator on $\mathbb{X}'$ and finally we have $\mathrm{U}(t,s)\mathrm{B}(\mathrm{U}(t,s)\mathrm{B})^*=\mathrm{U}(t,s)\mathrm{B}\mathrm{B}^*\mathrm{U}^*(t,s),$ $t>s$, is a compact operator  on $\mathbb{X}$. As we  proved in Lemma \ref{lem4.1} (see below also), one can show that the operator $$\varphi\mapsto\int_0^t\mathrm{U}(t,s)\mathrm{B}\mathrm{B}^*\mathrm{U}(t,s)\varphi(s)\mathrm{d}s$$ is a compact operator. Combining this fact with  \eqref{48}, it can be easily deduced that 
 \begin{align}\label{49}
 \left\|\int_0^t\mathrm{U}(t,s)\mathrm{B}\mathrm{B}^{*}\mathrm{U}(T,t)^{*}\left\{\mathrm{J}\left[\mathrm{R}(\lambda,\Lambda_{T})p(x^n(\cdot))\right]-\mathrm{J}\left[\mathrm{R}(\lambda,\Lambda_{T})p(x(\cdot))\right]\right\}\mathrm{d}s\right\|_{\mathbb{X}}\to 0 \ \text{ as } \ n\to\infty. 
 \end{align}
Hence, using \eqref{45} and \eqref{49}, for $t\in [0,T]$, we obtain
\begin{align*}
&\left\|(\Psi x_{n})(t)-(\Psi x)(t)\right\|_{\mathbb{X}}\nonumber\\&\leq \int_0^t\|\mathrm{U}(t,s)\|_{\mathcal{L}(\mathbb{X})}\|f(s,x_n(s))-f(s,x(s))\|_{\mathbb{X}}\mathrm{d}s\nonumber\\&\quad +\left\|\int_0^t\mathrm{U}(t,s)\mathrm{B}\mathrm{B}^{*}\mathrm{U}(T,t)^{*}\left\{\mathrm{J}\left[\mathrm{R}(\lambda,\Lambda_{T})p(x^n(\cdot))\right]-\mathrm{J}\left[\mathrm{R}(\lambda,\Lambda_{T})p(x(\cdot))\right]\right\}\mathrm{d}s\right\|_{\mathbb{X}}\nonumber\\&\leq CT\sup_{s\in[0,T]}\|f(s,x_n(s))-f(s,x(s))\|_{\mathbb{X}} \nonumber\\&\quad +\left\|\int_0^t\mathrm{U}(t,s)\mathrm{B}\mathrm{B}^{*}\mathrm{U}(T,t)^{*}\left\{\mathrm{J}\left[\mathrm{R}(\lambda,\Lambda_{T})p(x^n(\cdot))\right]-\mathrm{J}\left[\mathrm{R}(\lambda,\Lambda_{T})p(x(\cdot))\right]\right\}\mathrm{d}s\right\|_{\mathbb{X}}\to 0,
\end{align*}
as $n\rightarrow \infty$ and thus $\Psi$ is continuous on $\mathrm{C}([0,T];\mathbb{X})$.

\vskip 0.1in
\noindent\textbf{Step (2): }\emph{$\Psi$ is a compact operator.}
Our next aim is to show that the operator $\Psi:\mathrm{C}([0,T];\mathbb{X})\rightarrow \mathrm{C}([0,T];\mathbb{X})$ defined by \eqref{44}, is a compact operator. We follow the works \cite{F,MI}, etc to fulfill this goal. In virtue of Lemma \ref{lem4.1}, it sufficient to establish the compactness of $\Psi_{1}:\mathrm{C}([0,T];\mathbb{X})\rightarrow\mathrm{C}([0,T];\mathbb{X})$, defined by 
\begin{align*}
(\Psi_{1}x)(t)&=\int^{t}_{0}\mathrm{U}(t,s)\mathrm{B}u(s)\mathrm{d}s,\ t\in [0,T],
\end{align*}
with
\begin{align*}
u(t)=\mathrm{B}^{*}\mathrm{U}(T,t)^{*}\mathrm{J}[\mathrm{R}(\lambda,\Lambda_{T})p(x(\cdot))],
\end{align*}
and
\begin{align*}
p(x(\cdot))=x_{T}-\mathrm{U}(T,0)x_{0}-\int^{T}_{0}\mathrm{U}(T,s)f(s,x(s))\mathrm{d}s.
\end{align*}
Let us now establish that $\Psi_{1}$ is compact using Arzel\'a-Ascoli theorem. Let $\mathcal{B}_R$ be the ball defined \eqref{43}  be a bounded subset of $\mathrm{C}([0,T];\mathbb{X})$. For $0\leq t_{1}\leq t_{2}\leq T$ and $x\in \mathcal{B}_R$, let us consider the following:
\begin{align}\label{410}
&\left\|(\Psi_{1}x)(t_{2})-(\Psi_{1}x)(t_{1})\right\|_{\mathbb{X}}\nonumber\\&=\left\|\int^{t_1}_{0}\left(\mathrm{U}(t_2,s)-\mathrm{U}(t_1,s)\right)\mathrm{B}u(s)\mathrm{d}s+\int_{t_1}^{t_2}\mathrm{U}(t_2,s)\mathrm{B}u(s)\mathrm{d}s\right\|_{\mathbb{X}}\nonumber\\
&\leq \int^{t_{1}}_{0}\left\|\mathrm{U}(t_{2},s)-\mathrm{U}(t_{1},s)\right\|_{\mathcal{L}(\mathbb{X)}}\left\|\mathrm{B}\|_{\mathcal{L}(\mathbb{H};\mathbb{X})}\|u(s)\right\|_{\mathbb{H}}\mathrm{d}s+\int^{t_{2}}_{t_{1}}\|\mathrm{U}(t_{2},s)\|_{\mathcal{L}(\mathbb{X})}\|\mathrm{B}\|_{\mathcal{L}(\mathbb{H};\mathbb{X})}\|u(s)\|_{\mathbb{H}}\mathrm{d}s.
\end{align}
Next, we estimate $\|u(t)\|_{\mathbb{H}},$ using \eqref{25} as 
\begin{align}\label{411}
\|u(t)\|_{\mathbb{H}}&=\|\mathrm{B}^{*}\mathrm{U}(T,t)^{*}\mathrm{J}[\mathrm{R}(\lambda,\Lambda_{T})p(x(\cdot))]\|_{\mathbb{H}}\nonumber\\&\leq\frac{1}{\lambda}\|\mathrm{B}^*\|_{\mathcal{L}(\mathbb{X}',\mathbb{H})}\|\mathrm{U}(T,t)^*\|_{\mathcal{L}(\mathbb{X}')}\|\mathrm{J}[\lambda\mathrm{R}(\lambda,\Lambda_{T})p(x(\cdot))]\|_{\mathbb{X}'}\nonumber\\&\leq \frac{CN}{\lambda}\|p(x(\cdot))\|_{\mathbb{X}}\nonumber\\&\leq\frac{CN}{\lambda}\left(\|x_T\|_{\mathbb{X}}+\|\mathrm{U}(T,0)\|_{\mathcal{L}(\mathbb{X})}\|x_0\|_{\mathbb{X}}+\int_0^T\|\mathrm{U}(T,s)\|_{\mathcal{L}(\mathbb{X})}\|f(s,x(s))\|_{\mathbb{X}}\mathrm{d}s\right)\nonumber\\&\leq\frac{CN}{\lambda}\left(\|x_T\|_{\mathbb{X}}+C\|x_0\|_{\mathbb{X}}+CKT\right).
\end{align}
Let us take $\widetilde{C}=\left(\|x_T\|_{\mathbb{X}}+C\|x_0\|_{\mathbb{X}}+CKT\right)$, where the constant $K$ is appearing in Assumption \ref{ass2.2}-(A1). Using \eqref{411} in \eqref{410}, we obtain 
\begin{align}
&\left\|(\Psi_{1}x)(t_{2})-(\Psi_{1}x)(t_{1})\right\|_{\mathbb{X}}\leq \frac{CN^2\widetilde{C}}{\lambda}\int_0^{t_1}\left\|\mathrm{U}(t_{2},s)-\mathrm{U}(t_{1},s)\right\|_{\mathcal{L}(\mathbb{X)}}\mathrm{d}s+\frac{C^2N^2\widetilde{C}}{\lambda}(t_2-t_1). 
\end{align}
For $t_{1}=0$, from the above expression, it is immediate that
\begin{align*}
\lim_{t_{2}\rightarrow 0}\left\|(\Psi_{1}x)(t_{2})-(\Psi_{1}x)(t_{1})\right\|_{\mathbb{X}}=0,\ \text{ uniformly for }\ x\in \mathcal{B}_R.
\end{align*} 
For $0<t_{1}<T$, and for $0<\delta< t_{1}$, we infer that  
\begin{align*}
&\left\|(\Psi_{1}x)(t_{2})-(\Psi_{1}x)(t_{1})\right\|_{\mathbb{X}} \nonumber\\ &
\leq  \frac{CN^2\widetilde{C}}{\lambda}\left[\int_0^{\delta}\left\|\mathrm{U}(t_{2},s)-\mathrm{U}(t_{1},s)\right\|_{\mathcal{L}(\mathbb{X)}}\mathrm{d}s+\int_{\delta}^{t_1}\left\|\mathrm{U}(t_{2},s)-\mathrm{U}(t_{1},s)\right\|_{\mathcal{L}(\mathbb{X)}}\mathrm{d}s\right]\nonumber\\&\quad+\frac{C^2N^2\widetilde{C}}{\lambda}(t_2-t_1)  \nonumber\\ &
\leq  \frac{C^2(1+C)N^2\widetilde{C}\delta}{\lambda}+\frac{CN^2\widetilde{C}}{\lambda}\int_{\delta}^{t_1}\left\|\mathrm{U}(t_{2},s)-\mathrm{U}(t_{1},s)\right\|_{\mathcal{L}(\mathbb{X)}}\mathrm{d}s+\frac{C^2N^2\widetilde{C}}{\lambda}(t_2-t_1).
\end{align*}
From Lemma \ref{lem2.2} (see Proposition 2.1, \cite{WF} also), we know that the evolution system $\mathrm{U}(t,s)$ is compact for $t-s>0$ and hence $\mathrm{U}(t,s)$ is continuous uniformly in an operator norm for $\delta \leq s <t\leq T$ (see Theorem 3.2, Chapter 2, \cite{P}). Thus, using the compactness of $\mathrm{U}(t,s)$ and the arbitrariness of $\delta$ in the above inequality gives
\begin{align*}
\lim_{t_{2}\rightarrow t_{1}}\left\|(\Psi_{1}x)(t_{2})-(\Psi_{1}x)(t_{1})\right\|_{\mathbb{X}} =0, \ \text{ uniformly for }\ x\in \mathcal{B}_R.
\end{align*}
The above expression says that $\Psi_{1}\mathcal{B}_R$ is equicontinuous on $\mathrm{C}([0,T];\mathbb{X})$.

For $t=0$, it is clear that the set $\left\{(\Psi_{1}x)(0):x\in \mathcal{B}_R\right\}$ is precompact in $\mathbb{X}$. Now, for a given $t\in(0,T]$, let us take  $0<\varepsilon <t$. Once again invoking Lemma \ref{lem2.2}, we get that the operator $\mathrm{U}(t,t-\varepsilon)$ is compact. Thus, we have the set 
\begin{align*}
\left\{\mathrm{U}(t,t-\varepsilon)\int^{t-\varepsilon}_{0}\mathrm{U}(t-\varepsilon,s)\mathrm{B}u(s)\mathrm{d}s:x\in \mathcal{B}_R\right\}
\end{align*}
is precompact in $\mathbb{X}$. For $\varepsilon<\delta<T$, we further have
\begin{align}\label{413}
&\left\|\mathrm{U}(t,t-\varepsilon)\int^{t-\varepsilon}_{0}\mathrm{U}(t-\varepsilon,s)\mathrm{B}u(s)\mathrm{d}s-\int^{t-\varepsilon}_{0}\mathrm{U}(t,s)\mathrm{B}u(s)\mathrm{d}s\right\|_{\mathbb{X}}\nonumber\\& =  \int^{t-\varepsilon}_{0}\left\|(\mathrm{U}(t,t-\varepsilon)\mathrm{U}(t-\varepsilon,s)-\mathrm{U}(t,s))\mathrm{B}u(s)\right\|_{\mathbb{X}}\mathrm{d}s\nonumber\\&
\leq\frac{CN^2\widetilde{C}}{\lambda}\int^{t-\delta}_{0}\left\|\mathrm{U}(t,t-\varepsilon)\mathrm{U}(t-\varepsilon,s)-\mathrm{U}(t,s)\right\|_{\mathcal{L}(\mathbb{X})}\mathrm{d}s\nonumber\\&\quad
+\frac{CN^2\widetilde{C}}{\lambda}\int^{t-\varepsilon}_{t-\delta}\left\|\mathrm{U}(t,t-\varepsilon)\mathrm{U}(t-\varepsilon,s)-\mathrm{U}(t,s)\right\|_{\mathcal{L}(\mathbb{X})}\mathrm{d}s\nonumber\\&
\leq\frac{CN^2\widetilde{C}}{\lambda}\int^{t-\delta}_{0}\left\|\mathrm{U}(t,t-\varepsilon)\mathrm{U}(t-\varepsilon,s)-\mathrm{U}(t,s)\right\|_{\mathcal{L}(\mathbb{X})}\mathrm{d}s+\frac{\delta C^2(1+C)N^2\widetilde{C}}{\lambda}.
\end{align} 
It should be noted that $-\varepsilon>-\delta$, and hence $t-\varepsilon>t-\delta$; which also guarantees  the existence of the second term in the right hand side of the inequality \eqref{413} makes sense. Using the semigroup property of the evolution operator $\left\{\mathrm{U}(t,s):t\geq s\right\}$, one can easily see that the first term in the  integral on the right hand side of the inequality \eqref{413} is zero. Thus, the arbitrariness of $\delta$ and the Lebesgue's dominated convergence theorem imply
\begin{align}\label{414}
\lim_{\varepsilon\rightarrow 0}\left\|\mathrm{U}(t,t-\varepsilon)\int^{t-\varepsilon}_{0}\mathrm{U}(t-\varepsilon,s)\mathrm{B}u(s)\mathrm{d}s-\int^{t-\varepsilon}_{0}\mathrm{U}(t,s)\mathrm{B}u(s)\mathrm{d}s\right\|_{\mathbb{X}}=0.
\end{align}
Let us now consider 
\begin{align*}
&\left\|\mathrm{U}(t,t-\varepsilon)\int^{t-\varepsilon}_{0}\mathrm{U}(t-\varepsilon,s)\mathrm{B}u(s)\mathrm{d}s-\int^{t}_{0}\mathrm{U}(t,s)\mathrm{B}u(s)\mathrm{d}s\right\|_{\mathbb{X}}\nonumber\\
&\leq\left\|\mathrm{U}(t,t-\varepsilon)\int^{t-\varepsilon}_{0}\mathrm{U}(t-\varepsilon,s)\mathrm{B}u(s)\mathrm{d}s-\int^{t-\varepsilon}_{0}\mathrm{U}(t,s)\mathrm{B}u(s)\mathrm{d}s\right\|_{\mathbb{X}}\nonumber\\&
\quad+\left\|\int^{t}_{t-\varepsilon}\mathrm{U}(t,s)\mathrm{B}u(s)\mathrm{d}s\right\|_{\mathbb{X}}\nonumber\\& 
\leq\frac{C\varepsilon N^2\widetilde{C}}{\lambda}+\left\|\mathrm{U}(t,t-\varepsilon)\int^{t-\varepsilon}_{0}\mathrm{U}(t-\varepsilon,s)\mathrm{B}u(s)\mathrm{d}s-\int^{t-\varepsilon}_{0}\mathrm{U}(t,s)\mathrm{B}u(s)\mathrm{d}s\right\|_{\mathbb{X}}.
\end{align*}
Using \eqref{414}, we easily have 
\begin{align*}
\lim_{\varepsilon\rightarrow 0}\left\|\mathrm{U}(t,t-\varepsilon)\int^{t-\varepsilon}_{0}\mathrm{U}(t-\varepsilon,s)\mathrm{B}u(s)\mathrm{d}s-\int^{t}_{0}\mathrm{U}(t,s)\mathrm{B}u(s)\mathrm{d}s\right\|_{\mathbb{X}}=0.
\end{align*}
This shows that $\left\{(\Psi_{1}x)(t):x\in \mathcal{B}_R\right\}$ is precompact in $\mathbb{X}$, using the total boundedness. Thus, an application of Arzel\'a-Ascoli theorem yields  that $\Psi_{1}$ is compact.

\vskip 0.1in
\noindent\textbf{Step (3): }\emph{Application of Schauder's fixed point theorem.}
It is left to show that there exists an $R_{0}>0$ such that $\Psi \mathcal{B}_{R_{0}}\subseteq \mathcal{B}_{R_{0}}$. Remember that for all $x\in \mathrm{C}([0,T];\mathbb{X})$, we have 
\begin{align*}
\|(\Psi x)(t)\|_{\mathbb{X}}&\leq\|\mathrm{U}(t,0)x_{0}\|_{\mathbb{X}}+\left\|\int^{t}_{0}\mathrm{U}(t,s)f(s,x(s))\mathrm{d}s\right\|_{\mathbb{X}}+\left\|\int^{t}_{0}\mathrm{U}(t,s)\mathrm{B}u(s)\mathrm{d}s\right\|_{\mathbb{X}}\nonumber\\&\leq\|\mathrm{U}(t,0)\|_{\mathcal{L}(\mathbb{X})}\|x_{0}\|_{\mathbb{X}}+\int_0^t\|\mathrm{U}(t,s)\|_{\mathcal{L}(\mathbb{X})}\|f(s,x(s))\|_{\mathbb{X}}\mathrm{d}s\nonumber\\&\quad+\int_0^t\|\mathrm{U}(t,s)\|_{\mathcal{L}(\mathbb{X})}\|\mathrm{B}\|_{\mathcal{L}(\mathbb{H};\mathbb{X})}\|u(s)\|_{\mathbb{H}}\mathrm{d}s \nonumber\\&\leq C\|x_0\|_{\mathbb{X}}+CKt+\frac{C^2N^2\widetilde{C}t}{\lambda},
\end{align*}
where we used \eqref{411}. Thus, it is immediate that 
\begin{align}\label{4.14}
\sup\limits_{t\in[0,T]}\|(\Psi x)(t)\|_{\mathbb{X}}\leq C\|x_0\|_{\mathbb{X}}+CKT+\frac{C^2N^2\widetilde{C}T}{\lambda}.
\end{align}
From the  inequality \eqref{4.14}, one can easily see that for large enough $R_{0}> 0$, the inequality $\sup\limits_{t\in[0,T]}\|(\Psi x)(t)\|_{\mathbb{X}}\leq R_0$ holds for all $x\in \mathrm{C}([0,T];\mathbb{X})$ and hence  $\Psi \mathcal{B}_{R_{0}}\subseteq \mathcal{B}_{R_{0}}$. Therefore, using Schauder's fixed point theorem, the operator $\Psi$ has a fixed point in $\mathcal{B}_{R_{0}}$, which is a mild solution of the system \eqref{1}.
\end{proof}
\end{thm}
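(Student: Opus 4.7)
The plan is to reformulate the problem as a fixed point problem and apply Schauder's fixed point theorem. Motivated by the feedback form derived for the linear problem in Lemma \ref{lem3.1}, I would define the solution operator $\Psi:\mathrm{C}([0,T];\mathbb{X})\to\mathrm{C}([0,T];\mathbb{X})$ by
\begin{equation*}
(\Psi x)(t)=\mathrm{U}(t,0)x_{0}+\int_{0}^{t}\mathrm{U}(t,s)\bigl[f(s,x(s))+\mathrm{B}u(s)\bigr]\mathrm{d}s,
\end{equation*}
where $u(t)=\mathrm{B}^{*}\mathrm{U}(T,t)^{*}\mathrm{J}[\mathrm{R}(\lambda,\Lambda_{T})p(x(\cdot))]$ and $p(x(\cdot))=x_{T}-\mathrm{U}(T,0)x_{0}-\int_{0}^{T}\mathrm{U}(T,s)f(s,x(s))\mathrm{d}s$. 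Any fixed point of $\Psi$ is a mild solution of \eqref{1} paired with this particular control, so it suffices to produce such a fixed point.

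First I would check the continuity of $\Psi$. For $x_{n}\to x$ in $\mathrm{C}([0,T];\mathbb{X})$, the continuity and uniform bound (A1) on $f$ together with dominated convergence give $p(x_{n}(\cdot))\to p(x(\cdot))$ in $\mathbb{X}$, and the contraction-type estimate \eqref{25} then yields $\mathrm{R}(\lambda,\Lambda_{T})p(x_{n}(\cdot))\to\mathrm{R}(\lambda,\Lambda_{T})p(x(\cdot))$ strongly in $\mathbb{X}$. Because $\mathrm{J}$ is only demicontinuous, this merely produces weak convergence of $\mathrm{J}[\mathrm{R}(\lambda,\Lambda_{T})p(x_{n}(\cdot))]$ in $\mathbb{X}'$; I would then appeal to the compactness of $\mathrm{U}(T,t)\mathrm{B}$ (and hence of its adjoint) guaranteed by Lemma \ref{lem2.2} to upgrade this to strong convergence of the control integral in $\mathbb{X}$, so that $(\Psi x_{n})(t)\to(\Psi x)(t)$ uniformly in $t$.

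Next I would establish that $\Psi$ is compact on bounded balls $\mathcal{B}_{R}$. Splitting $\Psi=\Psi_{0}+\Psi_{1}$ with $\Psi_{0}x(t)=\mathrm{U}(t,0)x_{0}+(\mathrm{G}f(\cdot,x(\cdot)))(t)$ and $\Psi_{1}x(t)=(\mathrm{G}\mathrm{B}u(\cdot))(t)$ (with $\mathrm{G}$ the Cauchy operator of Lemma \ref{lem4.1}), relative compactness of $\Psi_{0}\mathcal{B}_{R}$ follows at once from Lemma \ref{lem4.1} combined with (A1). For $\Psi_{1}$, the pointwise estimate $\|u(t)\|_{\mathbb{H}}\leq (CN/\lambda)\widetilde{C}$ with $\widetilde{C}=\|x_{T}\|_{\mathbb{X}}+C\|x_{0}\|_{\mathbb{X}}+CKT$ (obtained from \eqref{25}, (A1), (A2), and Lemma \ref{lem2.1}(1)) reduces the matter to an Arzelà--Ascoli check analogous to the proof of Lemma \ref{lem4.1}: equicontinuity via norm continuity of $\mathrm{U}(t,s)$ off the diagonal, and pointwise precompactness via the factorisation $\mathrm{U}(t,t-\varepsilon)\int_{0}^{t-\varepsilon}\mathrm{U}(t-\varepsilon,s)(\cdot)\mathrm{d}s$ combined with compactness of $\mathrm{U}(t,t-\varepsilon)$.

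Finally, combining these bounds I obtain $\sup_{t\in[0,T]}\|(\Psi x)(t)\|_{\mathbb{X}}\leq C\|x_{0}\|_{\mathbb{X}}+CKT+(C^{2}N^{2}\widetilde{C}T)/\lambda$, a constant independent of $x$, so any $R_{0}$ exceeding this bound satisfies $\Psi\mathcal{B}_{R_{0}}\subseteq\mathcal{B}_{R_{0}}$; Schauder's theorem then produces the desired fixed point. I expect the main obstacle to be the continuity step: because $\mathrm{J}$ is only demicontinuous on a general reflexive Banach space (rather than continuous, as is trivially the case in the Hilbert setting where $\mathrm{J}=\mathrm{I}$), one cannot pass to the limit inside the control integral by elementary means, and it is precisely the compactness hypothesis (P4), via Lemma \ref{lem2.2}, that rescues continuity of $\Psi$; absent this compactness the whole Schauder scheme would collapse.
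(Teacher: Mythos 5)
Your proposal is correct and follows essentially the same route as the paper's own proof: the same solution operator $\Psi$ with the feedback control, continuity via demicontinuity of $\mathrm{J}$ upgraded to strong convergence through the compactness of $\mathrm{U}(t,s)\mathrm{B}\mathrm{B}^{*}\mathrm{U}(T,t)^{*}$, compactness of $\Psi$ by splitting off the control term $\Psi_{1}$ and running Arzel\`a--Ascoli with the $\mathrm{U}(t,t-\varepsilon)$ factorisation, the same uniform bound with the constant $\widetilde{C}$, and Schauder's fixed point theorem. Your closing observation that the compactness hypothesis (P4) is what rescues the continuity step, given that $\mathrm{J}$ is only demicontinuous in the Banach-space setting, accurately identifies the crux of the paper's argument.
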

\begin{rem}
	If $\mathbb{X}'$ is uniformly convex (or if $\mathbb{X}$ is uniformly smooth), then the duality mapping $\mathrm{J}:\mathbb{X}\to\mathbb{X}'$ is is uniformly continuous on every bounded subset of $\mathbb{X}$. Then, with the help of \eqref{46}, one can replace the weak continuity given in \eqref{48} with uniform continuity in $\mathcal{B}_{R_0}$ and \eqref{49} can be estimated as
	\begin{align}
&	\left\|\int_0^t\mathrm{U}(t,s)\mathrm{B}\mathrm{B}^{*}\mathrm{U}(T,t)^{*}\left\{\mathrm{J}\left[\mathrm{R}(\lambda,\Lambda_{T})p(x^n(\cdot))\right]-\mathrm{J}\left[\mathrm{R}(\lambda,\Lambda_{T})p(x(\cdot))\right]\right\}\mathrm{d}s\right\|_{\mathbb{X}}\nonumber\\&\leq \int_0^t\|\mathrm{U}(t,s)\|_{\mathcal{L}(\mathbb{X})}\|\mathrm{B}\|_{\mathcal{L}(\mathbb{H};\mathbb{X})}\|\mathrm{B}^*\|_{\mathcal{L}(\mathbb{X}',\mathbb{H})}\|\mathrm{U}(T,t)^*\|_{\mathcal{L}(\mathbb{X}')}\nonumber\\&\qquad\times\|\mathrm{J}\left[\mathrm{R}(\lambda,\Lambda_{T})p(x^n(\cdot))\right]-\mathrm{J}\left[\mathrm{R}(\lambda,\Lambda_{T})p(x(\cdot))\right]\|_{\mathbb{X}'}\mathrm{d}s\nonumber\\&\leq C^2N^2t\|\mathrm{J}\left[\mathrm{R}(\lambda,\Lambda_{T})p(x^n(\cdot))\right]-\mathrm{J}\left[\mathrm{R}(\lambda,\Lambda_{T})p(x(\cdot))\right]\|_{\mathbb{X}'}\nonumber\\&\to 0 \ \text{ as } \ n\to\infty,
	\end{align}
	using the uniform continuity of $\mathrm{J}[\cdot]$. 
\end{rem}
Let us now establish our main result on the approximate controllability of the system \eqref{1}. 
\begin{thm}\label{thm4.2}
Let the Assumptions (P1)-(P4) and (A1)-(A3) hold true. Then the non-autonomous control system \eqref{1} is approximately controllable on $[0,T]$.
\end{thm}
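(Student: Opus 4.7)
The plan is to show that for every target $x_T\in\mathbb{X}$, the family of mild solutions $\{x_\lambda\}_{\lambda>0}$ produced by Theorem~\ref{thm4.1} from the coupled system \eqref{41} satisfies $x_\lambda(T)\to x_T$ in $\mathbb{X}$ along some subsequence $\lambda_n\downarrow 0$. Since each $x_{\lambda_n}(T)\in\mathscr{R}_T(x_0)$, this will give $x_T\in\overline{\mathscr{R}_T(x_0)}$, and the arbitrariness of $x_T$ yields $\overline{\mathscr{R}_T(x_0)}=\mathbb{X}$.

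First I would reproduce in the nonlinear setting the algebraic identity derived in \eqref{3.29}. Starting from the mild representation of $x_\lambda(T)$ with the feedback control of \eqref{41}, the control term $\int_0^T\mathrm{U}(T,s)\mathrm{B}u(s)\mathrm{d}s$ equals $\Lambda_T\mathrm{J}[\mathrm{R}(\lambda,\Lambda_T)p(x_\lambda(\cdot))]$, and using $(\lambda\mathrm{I}+\Lambda_T\mathrm{J})\mathrm{R}(\lambda,\Lambda_T)=\mathrm{I}$ one obtains
$$x_\lambda(T)-x_T \;=\; -\lambda\mathrm{R}(\lambda,\Lambda_T)p(x_\lambda(\cdot)) \;=\; -z_\lambda\!\bigl(p(x_\lambda(\cdot))\bigr).$$
Thus the task reduces to showing $\|z_\lambda(p(x_\lambda(\cdot)))\|_{\mathbb{X}}\to 0$.

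Next I would establish precompactness of $\{p(x_\lambda(\cdot))\}_{\lambda>0}\subset\mathbb{X}$. Assumption (A1) gives $\|f(s,x_\lambda(s))\|_{\mathbb{X}}\le K$ uniformly in $(\lambda,s)$. Splitting $\int_0^T\mathrm{U}(T,s)f(s,x_\lambda(s))\mathrm{d}s$ as $\int_0^{T-\varepsilon}+\int_{T-\varepsilon}^T$, the second piece has norm $\le CK\varepsilon$, while the first equals $\mathrm{U}(T,T-\varepsilon)\int_0^{T-\varepsilon}\mathrm{U}(T-\varepsilon,s)f(s,x_\lambda(s))\mathrm{d}s$ and the compactness of $\mathrm{U}(T,T-\varepsilon)$ from Lemma~\ref{lem2.2} carries the bounded family of inner integrals into a precompact set. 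A standard $\varepsilon$-net argument, essentially the endpoint version of Step~(2) in Theorem~\ref{thm4.1}, gives precompactness of $\{\int_0^T\mathrm{U}(T,s)f(s,x_\lambda(s))\mathrm{d}s\}_\lambda$ and hence of $\{p(x_\lambda(\cdot))\}_\lambda$. Passing to a subsequence, $p(x_{\lambda_n}(\cdot))\to p^*$ strongly in $\mathbb{X}$.

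Finally I would close the argument via the triangle split
$$\|z_{\lambda_n}(p(x_{\lambda_n}(\cdot)))\|_{\mathbb{X}}\;\le\;\bigl\|z_{\lambda_n}(p(x_{\lambda_n}(\cdot)))-z_{\lambda_n}(p^*)\bigr\|_{\mathbb{X}} + \|z_{\lambda_n}(p^*)\|_{\mathbb{X}}.$$
The second term vanishes as $n\to\infty$ by Assumption (A3), since $p^*$ is $n$-independent. The main obstacle is the first term: because $\mathbb{X}$ is a general reflexive Banach space, the duality map $\mathrm{J}$ in the definition of $\mathrm{R}(\lambda,\Lambda_T)$ is nonlinear, and the naive Lipschitz bound obtained by subtracting the two instances of \eqref{24} blows up as $\lambda_n\downarrow 0$. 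To control it, I would use the subtracted identity
$$\lambda_n\bigl(z_{\lambda_n}(p(x_{\lambda_n}))-z_{\lambda_n}(p^*)\bigr)+\Lambda_T\bigl(\mathrm{J}[z_{\lambda_n}(p(x_{\lambda_n}))]-\mathrm{J}[z_{\lambda_n}(p^*)]\bigr)=\lambda_n\bigl(p(x_{\lambda_n})-p^*\bigr),$$
together with the a priori bound \eqref{25} (which gives uniform boundedness of both $z_{\lambda_n}$-terms), the monotonicity and demicontinuity of $\mathrm{J}$, the positivity of $\Lambda_T$ (equivalently $\ker(L_T)^*=\{0\}$, cf.\ Theorem~\ref{thm3.2} and Remark~\ref{rem3.2}), and a weak-compactness/weak-to-strong argument: reflexivity of $\mathbb{X}$ and $\mathbb{X}'$ supplies a weak limit along a subsequence, passing to the weak limit in the subtracted identity (whose right-hand side tends to $0$ strongly because $\lambda_n\to 0$ and $p(x_{\lambda_n})-p^*\to 0$) forces this weak limit into $\ker\Lambda_T\mathrm{J}=\{0\}$, and a final monotonicity/duality estimate upgrades the weak convergence to the strong convergence needed to conclude $\|x_{\lambda_n}(T)-x_T\|_{\mathbb{X}}\to 0$.
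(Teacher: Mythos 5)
Your proposal follows essentially the same route as the paper: the same algebraic identity $x_\lambda(T)-x_T=-\lambda\mathrm{R}(\lambda,\Lambda_T)p(x_\lambda(\cdot))$, then compactness to extract a strong limit of $p(x_\lambda(\cdot))$ along a subsequence, then Assumption (A3) to kill the limiting term. The two places you diverge are minor but worth recording. First, to get strong convergence of $p(x_{\lambda}(\cdot))$ the paper extracts a weak $\mathrm{L}^2([0,T];\mathbb{X})$ limit $g$ of the bounded family $f(\cdot,x_\lambda(\cdot))$ and uses compactness of the Cauchy operator $\mathrm{G}$ to upgrade $\int_0^T\mathrm{U}(T,s)[f(s,x_\lambda(s))-g(s)]\,\mathrm{d}s$ to strong convergence, whereas you prove precompactness of the set $\{\int_0^T\mathrm{U}(T,s)f(s,x_\lambda(s))\,\mathrm{d}s\}_\lambda$ directly via the $\mathrm{U}(T,T-\varepsilon)$ shift; these are interchangeable and rest on the same compactness of the evolution system. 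Second, and more substantively: for the final limit the paper simply writes $\|\lambda\mathrm{R}(\lambda,\Lambda_T)p(x_\lambda(\cdot))\|_{\mathbb{X}}\le\|\lambda\mathrm{R}(\lambda,\Lambda_T)(p(x_\lambda(\cdot))-\eta)\|_{\mathbb{X}}+\|\lambda\mathrm{R}(\lambda,\Lambda_T)\eta\|_{\mathbb{X}}$ and then applies \eqref{25} to the first term — a step that implicitly treats the map $h\mapsto z_\lambda(h)=\lambda(\lambda\mathrm{I}+\Lambda_T\mathrm{J})^{-1}h$ as subadditive, which is not automatic when $\mathrm{J}$ is nonlinear (i.e.\ outside the Hilbert setting). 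You correctly flag this as the delicate point and propose a monotonicity/demicontinuity argument on the subtracted identity; that is the right repair, but as written it remains a sketch (the final ``upgrade weak to strong convergence'' step is asserted, not carried out, and would need the quantitative estimate obtained by pairing the subtracted identity with $\mathrm{J}[z_{\lambda_n}(p(x_{\lambda_n}))]-\mathrm{J}[z_{\lambda_n}(p^*)]$ and invoking positivity of $\Lambda_T$). So your argument is, if anything, more careful than the paper's at its weakest link, but to be a complete proof that last step must be written out.
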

\begin{proof}
Invoking Theorem \ref{lem4.1}, we know that for every $\lambda>0$ and $x_{T}\in \mathbb{X}$, there exists a mild solution $x_{\lambda}\in \mathrm{C}([0,T];\mathbb{X})$ such that
\begin{align}\label{4p14}
x_{\lambda}(t)=\mathrm{U}(t,0)x_{0}+\int^{t}_{0}\mathrm{U}(t,s)\left[f(s,x_{\lambda}(s))+\mathrm{B}u(s)\right]\mathrm{d}s,\ t\in[0,T],
\end{align}
with
\begin{align*}
u(t)=\mathrm{B}^{*}\mathrm{U}(T,t)^{*}\mathrm{J}[\mathrm{R}(\lambda,\Lambda_{T})p(x_{\lambda}(\cdot))],
\end{align*}
and 
\begin{align*}
p(x_{\lambda}(\cdot))=x_{T}-\mathrm{U}(T,0)x_0-\int^{T}_{0}\mathrm{U}(T,s)f(s,x_{\lambda}(s))\mathrm{d}s.
\end{align*}
Using \eqref{4p14}, we have 
\begin{align}\label{417}
x_{\lambda}(T)&=\mathrm{U}(T,0)x_{0}+\int^{T}_{0}\mathrm{U}(T,s)\left[f(s,x_{\lambda}(s))+\mathrm{B}u(s)\right]\mathrm{d}s\nonumber\\&=\mathrm{U}(T,0)x_{0}+\int^{T}_{0}\mathrm{U}(T,s)f(s,x_{\lambda}(s))\mathrm{d}s+\Lambda_{T}\mathrm{J}[\mathrm{R}(\lambda,\Lambda_{T})p(x_{\lambda}(\cdot))]\nonumber\\
&= x_{T}-p(x_{\lambda}(\cdot))+\Lambda_{T}\mathrm{J}[\mathrm{R}(\lambda,\Lambda_{T})p(x_{\lambda}(\cdot))]\nonumber\\
&= x_{T}-(\lambda \mathrm{I}+\Lambda_{T}\mathrm{J})\mathrm{R}(\lambda,\Lambda_{T})p(x_{\lambda}(\cdot))+\Lambda_{T}\mathrm{J}[\mathrm{R}(\lambda,\Lambda_{T})p(x_{\lambda}(\cdot))]\nonumber\\
&= x_{T}-\lambda \mathrm{R}(\lambda,\Lambda_{T})p(x_{\lambda}(\cdot)).
\end{align}
Applying Assumption (A1),  we find 
\begin{align*}
\int^{T}_{0}\left\|f(s,x_{\lambda}(s))\right\|_{\mathbb{X}}^{2}\mathrm{d}s \leq K^{2}T.
\end{align*}
That is, the sequence $\left\{f(\cdot,x_{\lambda}(s)):\lambda>0\right\}$ is a bounded sequence in the reflexive Banach space $\mathrm{L}^{2}([0,T];\mathbb{X})$. Thanks to the Banach-Alaoglu theorem,  we can find a subsequence of $\left\{f(\cdot,x_{\lambda_k}(s)):\lambda_k>0\right\}$ of $\left\{f(\cdot,x_{\lambda}(s)):\lambda>0\right\}$ such that $\left\{f(\cdot,x_{\lambda_k}(s)):\lambda_k>0\right\}$  converging weakly to  $g(\cdot)\in \mathrm{L}^{2}([0,T];\mathbb{X})$. For notational convenience, we use the same index for subsequence also. Let us now define 
\begin{align*}
\eta:=x_{T}-\mathrm{U}(T,0)-\int^{T}_{0}\mathrm{U}(T,s)g(s)\mathrm{d}s.
\end{align*}
Then,  we have 
\begin{align}\label{418}
\left\|p(x_{\lambda}(\cdot))-\eta\right\|_{\mathbb{X}}\leq\left\|\int^{T}_{0}\mathrm{U}(T,s)\left[f(s,x_{\lambda}(s))-g(s)\right]\mathrm{d}s\right\|_{\mathbb{X}}.
\end{align}
As we have proved in Theorem \ref{thm4.1} (see Lemma \ref{lem4.1} also), using the compactness of $\mathrm{U}(t,s)$, one can show that
\begin{align*}
x(t)\mapsto\int^{t}_{0}\mathrm{U}(t,s)x(s)\mathrm{d}s
\end{align*}
from $\mathrm{L}^{2}([0,T];\mathbb{X})$ to $\mathrm{C}([0,T];\mathbb{X})$ is compact (see Lemma 3.2, Chapter 3, \cite{LY}  for one parameter family of compact semigroups). That is, the Cauchy operator $\mathrm{G}:\mathrm{L}^{2}([0,T];\mathbb{X})\rightarrow \mathrm{C}([0,T];\mathbb{X})$ is also compact and since $f(\cdot,x_{\lambda}(\cdot))\xrightharpoonup{w} g(\cdot)$  in $\mathrm{L}^{2}([0,T];\mathbb{X})$, we deduce that 
\begin{align*}
\int^{t}_{0}\mathrm{U}(t,s)\left[f(s,x_{\lambda}(s))-g(s)\right]\mathrm{d}s\rightarrow 0\ \text{ as }\ \lambda\downarrow 0.
\end{align*}
From \eqref{418}, we further obtain 
\begin{eqnarray}\label{419}
\left\|p(x_{\lambda})-\eta\right\|_{\mathbb{X}}\rightarrow 0, \ \text{ as }\ \lambda\downarrow 0.
\end{eqnarray}
Combing \eqref{417}- \eqref{419} and then using Assumption \ref{ass2.2}-(A3) and \eqref{25}, we finally have 
\begin{align*}
\left\|x_{\lambda}(T)-x_{T}\right\|_{\mathbb{X}}&=\left\|\lambda \mathrm{R}(\lambda,\Lambda_{T})p(x_{\lambda}(\cdot))\right\|_{\mathbb{X}}\\
&\leq \left\|\lambda \mathrm{R}(\lambda,\Lambda_{T})(p(x_{\lambda}(\cdot))-\eta)\right\|_{\mathbb{X}}+\left\|\lambda \mathrm{R}(\lambda,\Lambda_{T})\eta\right\|_{\mathbb{X}}\\  &\leq \left\|p(x_{\lambda}(\cdot))-\eta\right\|_{\mathbb{X}}+\left\|\lambda \mathrm{R}(\lambda,\Lambda_{T})\eta\right\|_{\mathbb{X}}\\
&\rightarrow 0,\ \text{ as}\ \lambda\downarrow 0, 
\end{align*}
which implies that the non-autonomous control system \eqref{1} is approximately controllable on $[0,T]$.
\end{proof}

\section{Application}\label{sec5}\setcounter{equation}{0}
 Let us now provide an example of nonlinear non-autonomous diffusion control system to validate the theory we developed in sections \ref{sec3} and \ref{sec4}. We first consider the one dimensional Laplace operator and discuss its properties. 

\subsection{One dimensional Laplace operator}
Let $p\geq 2$ and $\mathbb{X}=\mathrm{L}^p([0,\pi];\mathbb{R})$. Note that for $1<p<\infty$, $\mathbb{X}$ is a reflexive Banach space. We consider the following operator:
\begin{equation}\label{420}
\left\{
\begin{aligned}
\mathrm{A}f(\xi)&=f''(\xi), \ \text{ a.e. }\ x\in(0,\pi),\\
\mathrm{D}(\mathrm{A})&=\mathrm{W}^{2,p}([0,\pi];\mathbb{R})\cap\mathrm{W}^{1,p}_0([0,\pi];\mathbb{R}).
\end{aligned}\right.
\end{equation}
Note that $\mathrm{C}_0^{\infty}([0,\pi];\mathbb{R})\subset\mathrm{D}(\mathrm{A})$ and hence $\mathrm{D}(\mathrm{A})$ is dense in $\mathbb{X}$. Moreover, $\mathrm{A}$ is closed. Using Hille-Yosida theorem, one can show that $\mathrm{A}$ generates a \emph{$\mathrm{C}_0$-semigroup of contractions} $\mathrm{T}(t)$ on $\mathbb{X}$ (see \eqref{4.24} below also). 

The resolvent set $\rho(\mathrm{A})$ of $\mathrm{A}$ is the set of all complex numbers $\lambda$ for which $\lambda\mathrm{I}-\mathrm{A}$ is invertible, that is, $(\lambda\mathrm{I}-\mathrm{A})^{-1}$ is a bounded linear operator in $\mathbb{X}$.  The family $\mathrm{R}(\lambda,\mathrm{A})=(\lambda\mathrm{I}-\mathrm{A})^{-1}$,  $\lambda\in\rho(\mathrm{A})$ of bounded linear operators is called the resolvent of $\mathrm{A}$.  Let us denote $\sigma(\mathrm{A})=\mathbb{C}\backslash\rho(\mathrm{A})$ as the spectrum of $\mathrm{A}$ (see \cite{P} for more details).  The spectrum of the operator $\mathrm{A}$ defined in \eqref{420} is given by $\sigma(\mathrm{A})=\{-n^2:n\in\mathbb{N}\}$. Let us now show that, for all $\lambda\neq-n^2,n\in\mathbb{N}$, the  Sturm-Liouville system:
\begin{equation}\label{421}
\left\{
\begin{aligned}
\lambda f(\xi)-f''(\xi)&=g(\xi), \ 0<\xi<\pi,\\
f(0)=f(\pi)&=0, 
\end{aligned}
\right.
\end{equation}
has a unique solution for $f\in\mathrm{D}(\mathrm{A})$. Let us use the Fourier series of $g$ to write $g$ as $g(\xi)=\sum\limits_{n=1}^{\infty}g_n\sin(n\xi),$ $\xi\in[0,\pi]$. We seek a solution of the form $f(\xi)=\sum\limits_{n=1}^{\infty}f_n\sin(n\xi)$, $\xi\in[0,\pi]$. For \eqref{421} to be satisfied, we must have $(\lambda+n^2)f_n=g_n$, for all $n\geq 1$. Thus, for any $\lambda\neq-n^2,n\in\mathbb{N}$, the unique solution of \eqref{421} is given by $f(\xi)=\mathrm{R}(\lambda,\mathrm{A})g=(\lambda\mathrm{I}-\mathrm{A})^{-1}g=\sum\limits_{n=1}^{\infty}\frac{g_n}{\lambda+n^2}\sin(n\xi)$, $\xi\in[0,\pi]$. From the expression for $f$ it is also true that $f\in\mathrm{H}^2([0,\pi];\mathbb{R})\cap\mathrm{H}_0^1([0,\pi];\mathbb{R})$. For $ p = 2$, the operator $\mathrm{A}$ in\eqref{420} is self-adjoint and dissipative. Since $\mathrm{A}$ generates a strongly continuous contraction semigroup, and one can show that $\mathrm{A}$ is analytic also. Multiplying both sides of \eqref{421} by $f|f|^{p-2}$ and then integrating over $[0,\pi]$, we find 
\begin{align}\label{422}
\lambda\int_0^{\pi}|f(\xi)|^p\mathrm{d}\xi+(p-1)\int_0^{\pi}|f(\xi)|^{p-2}|f'(\xi)|^2\mathrm{d}\xi=\int_0^{\pi}g(\xi)f(\xi)|f(\xi)|^{p-2}\mathrm{d}\xi,
\end{align}
where we performed an integration by parts also. From the above relation, it is also clear that 
\begin{align}
\lambda\int_0^{\pi}|f(\xi)|^p\mathrm{d}\xi\leq\left(\int_0^{\pi}|g(\xi)|^p\mathrm{d}\xi\right)^{1/p}\left(\int_0^t|f(\xi)|^p\mathrm{d}\xi\right)^{\frac{p-1}{p}},
\end{align}
where we used Holder's inequality. Thus, we have \begin{align}\label{4.24}\|\mathrm{R}(\lambda,\mathrm{A})g\|_{\mathrm{L}^p}=\|f\|_{\mathrm{L}^p}\leq\frac{1}{\lambda}\|g\|_{\mathrm{L}^p}, \ \text{for all } \ \lambda>0.\end{align} For complex values of $\lambda$, one can obtain similar estimates by multiplying \eqref{421} with $\overline{f}|f|^{p-2}$, integrating by parts over $(0,\pi)$ and then considering real and imaginary parts separately and finally combining them together. This ensures that the corresponding semigroup is analytic for $p>2$ also. 

Now, we show that the semigroup $\mathrm{T}(t)$ is compact for $p=2$. Since $\mathrm{A}$ is the infinitesimal generator of a $\mathrm{C}_0$-semigroup of contractions on $\mathrm{L}^p([0,\pi];\mathbb{R})$, in order to prove $\mathrm{T}(t)$ is compact, it is enough to show that the resolvent $\mathrm{R}(\lambda,\mathrm{A})\in\mathcal{K}(\mathbb{X}),$ for some $\lambda\in\rho(\mathrm{A})$. Taking $\lambda=1$ and $p=2$ in \eqref{422}, we have \begin{align}\label{424}\int_0^{\pi}|f'(\xi)|^2\mathrm{d}\xi&\leq\int_0^{\pi}g(\xi)f(\xi)\mathrm{d}\xi\leq\left(\int_0^{\pi}|g(\xi)|^2\mathrm{d}\xi\right)^{1/2}\left(\int_0^{\pi}|f(\xi)|^2\mathrm{d}\xi\right)^{1/2}\nonumber\\&\leq \left(\int_0^{\pi}|g(\xi)|^2\mathrm{d}\xi\right)^{1/2}\left(\int_0^{\pi}|f'(\xi)|^2\mathrm{d}\xi\right)^{1/2},\end{align} where we used Cauchy-Schwarz and Poincar\'e inequalities. Hence, from \eqref{424}, we further have $\|\mathrm{R}(1,\mathrm{A})g'\|_{\mathrm{L}^2}=\|f'\|_{\mathrm{L}^2}\leq\|g\|_{\mathrm{L}^2}$. That is, $\mathrm{R}(1,\mathrm{A})$ maps the unit ball of $\mathrm{L}^2([0,\pi];\mathbb{R})$ into the unit ball of $\mathrm{H}_0^1([0,\pi];\mathbb{R})$, which is compactly embedded in $\mathrm{L}^2([0,\pi];\mathbb{R})$  (Theorem 9.16, \cite{HB}). Let $\{g_n\}_{n\in\mathbb{N}}\in\mathrm{L}^2([0,\pi];\mathbb{R}),$ be a bounded sequence such that $\|g_n\|_{\mathrm{L}^2}\leq K$. Thus, we have  $\|\mathrm{R}(1,\mathrm{A})g_n'\|_{\mathrm{L}^2}\leq\|g_n\|_{\mathrm{L}^2}\leq K$ and hence $\{\mathrm{R}(1,\mathrm{A})g_n\}_{n\in\mathbb{N}}\in\mathrm{H}^1_0([0,\pi];\mathbb{R}),$ is uniformly bounded. Using Morrey's inequality (Theorem 9.12, \cite{HB}), we know that the embedding of $\mathrm{H}_0^1([0,\pi];\mathbb{R})\subset\mathrm{C}^{0,1/2}([0,\pi];\mathbb{R})$ is continuous with 
$$|\mathrm{R}(1,\mathrm{A})g_n(\xi)|\leq \|\mathrm{R}(1,\mathrm{A})g_n\|_{\mathrm{C}^{0,1/2}}\leq\|\mathrm{R}(1,\mathrm{A})g_n'\|_{\mathrm{L}^2}\leq K,$$ for all $x\in[0,\pi]$
and 
$$|\mathrm{R}(1,\mathrm{A})g_n(\xi)-\mathrm{R}(1,\mathrm{A})g_n(\zeta)|\leq C \|\mathrm{R}(1,\mathrm{A})g_n'\|_{\mathrm{L}^2}|\xi-\zeta|^{1/2}\leq CK|\xi-\zeta|^{1/2}.$$ 
 Making use of the Arzel\'a-Ascoli theorem, we can find a subsequence $\{\mathrm{R}(1,\mathrm{A})g_n\}_{n\in\mathbb{N}}$ that converges uniformly. Thus, we conclude that $\mathrm{R}(1,\mathrm{A})$ is compact and hence the semigroup $\mathrm{T}(t)$ is also compact.  For $p>2$, let us consider a unit ball $\mathcal{B}$ in $\mathrm{L}^p([0,\pi];\mathbb{R})$. Then, we have 
 \begin{align*}
\|\mathrm{R}(1,\mathrm{A})g'\|_{\mathrm{L}^2}\leq\|g\|_{\mathrm{L}^2}\leq \pi^{\frac{1}{2}-\frac{1}{p}}\|g\|_{\mathrm{L}^p}\leq \pi^{\frac{1}{2}-\frac{1}{p}},
 \end{align*}
 since $g\in\mathcal{B}$. 
 That is, $\mathrm{R}(1,\mathrm{A})$ maps the unit ball of $\mathrm{L}^p([0,\pi];\mathbb{R})$ into the ball $$\mathcal{B}_r=\left\{g\in\mathrm{H}^{1}_0([0,\pi];\mathbb{R}):\|g'\|_{\mathrm{L}^2}\leq r=: \pi^{\frac{1}{2}-\frac{1}{p}}\right\}$$ of $\mathrm{H}_0^{1}([0,\pi];\mathbb{R})\subset\subset\mathrm{C}([0,\pi];\mathbb{R})\subset\mathrm{L}^p([0,\pi];\mathbb{R}),$ for any $p\in[2,\infty)$  (Theorem 8.8, \cite{HB}). Arguing similarly, as in the case of $p=2$, we obtain that the semigroup $\mathrm{T}(t)$ is compact on $\mathrm{L}^p([0,\pi];\mathbb{R}),$ for $p>2$. 
 Hence the analytic and compact semigroup $\mathrm{T}(t)$ generated by $\mathrm{A}$ can be obtained explicitly as 
 \begin{align*}
 \mathrm{T}(t)x=\sum\limits_{n=1}^{\infty}e^{-n^2 t}\langle x,w_n\rangle w_n, \ \text{ where } \ \langle x,w_n\rangle =\int_0^{\pi}x(\xi)w_n(\xi)\mathrm{d}\xi\ \text{ and }\ w_n(\xi)=\sqrt{\frac{2}{\pi}}\sin(n\xi),
 \end{align*}
 are the normalized eigenfunctions corresponding to the eigenvalues $\lambda_n=-n^2$, $n\in\mathbb{N}$. Note that $w_n\in\mathrm{L}^p([0,\pi];\mathbb{R}),$ for all $p\in[2,\infty)$ and hence for $y\in\mathbb{X}'$, $\mathrm{T}^*(t)y=\sum\limits_{n=1}^{\infty}e^{-n^2 t}\langle y,w_n\rangle w_n$.

 \subsection{Nonlinear non-autonomous diffusion system}
 Let us now consider the following nonlinear non-autonomous diffusion control system (\cite{FY,MI}):
 \begin{equation}\label{57}
 \left\{
 \begin{aligned}
 \frac{\partial y(t,\xi)}{\partial t}&=a(t,\xi)\frac{\partial^2y(t,\xi)}{\partial \xi^2}+h(t,y(t,\xi))+\eta z(t,\xi), \ \text{ for } \ t\in [0,T], x\in[0,\pi], \\
 y(t,0)&=y(t,\pi)=0, \ \text{ for }\  t\in[0,T], \\
 y(0,\xi)&=\varphi(\xi), \ \text{ for }\ \xi\in[0,\pi]. 
 \end{aligned}
 \right.
 \end{equation}
 We need the following assumptions $a(\cdot,\cdot)$, $\eta$ and $z(\cdot,\cdot)$. 
 \begin{Ass}\label{ass5.1}
 	Let $a(\cdot,\cdot)$, $\eta$ and $z(\cdot,\cdot)$ satisfy the following assumptions: 
 \begin{enumerate}
 	\item [(i)] $a(t,\xi)\geq \delta>0$ and $a(t,x)\in\mathrm{C}^{0,\mu}([0,T];\mathrm{C}([0,\pi]))$, that is, $a(t,x)\in\mathrm{C}([0,\pi];\mathbb{R})$ is uniformly H\"older continuous of order $0<\mu\leq 1$ with respect to the variable $t\in\mathbb{R}$,
 	\item [(ii)] $\eta>0$ and $z:[0,T]\times[0,\pi]\to[0,\pi]$  is continuous in $t$. 
 \end{enumerate}
\end{Ass}
Let us now show that under the Assumption \eqref{ass5.1}-(i), $\mathrm{A}(t)$ generates a \emph{unique evolution system} $\{\mathrm{U}(t,s)\}_{t\geq s}$ on $\mathbb{X}=\mathrm{L}^p([0,\pi];\mathbb{R})$.  For each $t\in[0,T]$, we take $\mathrm{D}(\mathrm{A}(t))=\mathrm{D}(\mathrm{A})=\mathrm{W}^{2,p}([0,\pi];\mathbb{R})\cap\mathrm{W}^{1,p}_0([0,\pi];\mathbb{R})$. Next, we define the operator $\mathrm{A}(t):\mathrm{D}(\mathrm{A}(t))\subset\mathbb{X}\to\mathbb{X}$  as follows:
\begin{align}
\mathrm{A}(t)x(\xi)=a(t,\xi)\mathrm{A}x(\xi)=a(t,\xi)x''(\xi), \ \text{ for }\ x\in\mathrm{D}(\mathrm{A}(t)), \ t\in[0,T],\ \xi\in[0,\pi],
\end{align}
the operator is closed and the domain $\mathrm{D}(\mathrm{A}(t))=\mathrm{D}$ and is dense in $\mathbb{X}$, and hence (P1) is satisfied. From, Assumption \eqref{ass5.1}-(i), it is clear that there exists a constant $M>0$ such that 
\begin{align}\label{5.9}
\sup_{(t,x)\in[0,T]\times[0,\pi]}|a(t,\xi)|\leq M.
\end{align}
Next, we consider the following Sturm-Liouville system:
\begin{equation}\label{59}
\left\{
\begin{aligned}
\left(\lambda\mathrm{I}-\mathrm{A}(t)\right)f(\xi)&=g(\xi), \ 0<\xi<\pi,\\
f(0)=f(\pi)&=0, 
\end{aligned}
\right.
\end{equation}
Since $a(t,x)>\delta>0$, the equation \eqref{59} can be written  as 
\begin{align}\label{511}
\left(\frac{\lambda\mathrm{I}}{a(t,\xi)}-\Delta\right)f(\xi)=\frac{g(\xi)}{a(t,\xi)},
\end{align}
where $\Delta f(\xi)=f''(\xi)$. Multiplying both sides of \eqref{511} by $f|f|^{p-2}$ and then integrating over $[0,\pi]$, we obtain
\begin{align}\label{512}
\lambda\int_0^{\pi}\frac{|f(\xi)|^p}{a(t,\xi)}\mathrm{d}\xi+(p-1)\int_0^{\pi}|f(\xi)|^{p-2}|f'(\xi)|^2\mathrm{d}\xi=\int_0^{\pi}\frac{g(\xi)}{a(t,\xi)}f(\xi)|f(\xi)|^{p-2}\mathrm{d}\xi.
\end{align}
Applying H\"older's inequality and \eqref{5.9}, we further have 
\begin{align}
\frac{\lambda}{M}\int_0^{\pi}|f(\xi)|^p\mathrm{d}\xi\leq \lambda\int_0^{\pi}\frac{|f(\xi)|^p}{a(t,\xi)}\mathrm{d}\xi\leq\frac{1}{\delta} \left(\int_0^{\pi}|g(\xi)|^p\mathrm{d}\xi\right)^{\frac{1}{p}}\left(\int_0^t|f(\xi)|^p\mathrm{d}\xi\right)^{\frac{p-1}{p}},
\end{align}
and hence we have 
\begin{align}
\|\mathrm{R}(\lambda,\mathrm{A}(t))g\|_{\mathrm{L}^p}=\|f\|_{\mathrm{L}^p}\leq\frac{M}{\lambda\delta}\|g\|_{\mathrm{L}^p}, \ \text{for all } \ \lambda>0,
\end{align}
which ensures the condition (P2). Let us now consider
\begin{align}
\|(\mathrm{A}(t)-\mathrm{A}(s))\mathrm{A}^{-1}(\tau)f\|_{\mathrm{L}^p}&=\|(a(t,\xi)-a(s,\xi))a(\tau,\xi)^{-1}f\|_{\mathrm{L}^p}\nonumber\\&\leq \sup_{\xi\in[0,\pi]}|a(t,\xi)-a(s,\xi)|\sup_{\xi\in[0,\pi]}a(\tau,\xi)^{-1}\|f\|_{\mathrm{L}^p}\nonumber\\&\leq\frac{C}{\delta}|t-s|^{\mu}\|f\|_{\mathrm{L}^p},
\end{align}
so that we have $\|(\mathrm{A}(t)-\mathrm{A}(s))\mathrm{A}^{-1}(\tau)\|_{\mathcal{L}(\mathbb{X})}\leq\frac{C}{\delta}|t-s|^{\mu}$ and hence the condition (P3) holds true. The compactness of the resolvent operator $\mathrm{R}(\lambda,\mathrm{A}(t)),$ for condition (P4) can be established in a similar way as in the previous subsection. Thus, using Lemmas \ref{lem2.1} and \ref{lem2.2}, one can assure the existence of a unique evolution system $\left\{\mathrm{U}(t,s):0\leq s\leq t\leq T\right\}$, which is compact for $t-s> 0$. 
Let us now define $$u(t)(\xi):=y(t,\xi)\ \text{ for }\ t\in[0,T]\ \text{ and }\ \xi\in[0,\pi].$$ Then nonlinear function $f:[0,T]\times\mathbb{X}\to\mathbb{X}$ is given by $$f(t,x(t))(\xi)=h(t,y(t,\xi)).$$ Let the operator $\mathrm{B}:\mathrm{L}^2([0,\pi];\mathbb{R})\to\mathbb{X}$ be a bounded linear map defined by $$\mathrm{B}(u(t))(\xi)=u(t)(\xi)=\eta z(t,\xi),\ \text{ for }\ (t,\xi)\in[0,T]\times[0,\pi].$$ With the above notations, \eqref{57} can be written in the abstract form as: 
\begin{equation}\label{517}
\left\{
\begin{aligned}
\frac{\mathrm{d}x(t)}{\mathrm{d}t}&=\mathrm{A}(t)x(t)+f(t,x(t))+\mathrm{B}(u(t)),\\
x(0)&=\varphi.
\end{aligned}\right.
\end{equation}
The boundary conditions has been taken care by the definition of domain of the operator $\mathrm{A}(t)$ and into the requirement that $x(t)\in\mathrm{ D}(\mathrm{A}),$ for all $t\geq 0$. For the corresponding linear system to \eqref{517} to be approximately controllable, we know from Theorem \ref{thm3.2} that $\mathrm{U}^*(T,t)\mathrm{B}^*x'=0$ on  $[0,T]$ implies $x'=0$ (see Remark \ref{rem3.2} also). For $x'\in\mathbb{X}'$, we consider 
\begin{align*}
\mathrm{B}^*\mathrm{U}^*(T,t)x'=0\Rightarrow \mathrm{U}(T,t)x'=0\Rightarrow x'=0,
\end{align*}
and hence the linear system corresponding to \eqref{517} is approximately controllable. Thus, $\Lambda_T$ is positive; that is, $\langle x', \Lambda_T x' \rangle > 0,$ for all nonzero $x'\in\mathbb{X}'$ and hence from Theorem 2.3, \cite{M}, we obtain  that for every $x\in\mathbb{X}$, $\lambda\mathrm{R}(\lambda,\Lambda_T)(x)$ converges to zero as $\lambda\downarrow 0$ in strong operator topology. Thus, the Assumption \ref{ass2.2}-(A3) holds true. If we let, $f(t,x(t))=\sin(x(t))$, then Assumptions (A1) and (A2) are satisfied with constants $K=1$ and $N=1$, respectively. Invoking Theorem \ref{thm4.2}, we finally obtain that the nonlinear system \eqref{517} (equivalently the system \eqref{57}) is approximately controllable. 

\begin{rem}
	If $a(t,\xi)$ in \eqref{57} is independent of $\xi$, then the evolution system  $\{\mathrm{U}(t,s)\}_{t\geq s}$ can be explicitly written as 
$
	\mathrm{U}(t,s)x=\mathrm{T}\left(\int_s^{t}a(\tau)d\tau\right)x,
$
	for $x\in\mathbb{X}$. 
	\end{rem}

\end{document}